\numberwithin{equation}{section}
\newtheorem{theorem}{Theorem}[section]
\newtheorem{proposition}[theorem]{Proposition}
\newtheorem{definition}[theorem]{Definition}
\newtheorem{remark}[theorem]{Remark}
\newtheorem{lemma}[theorem]{Lemma}
\newtheorem{example}[theorem]{Example}
\newtheorem{corollary}[theorem]{Corollary}
\newcommand{\edge}{\ar@{-}}
\newcommand{\pf}{\noindent\begin {proof}}
\newcommand{\epf}{\end{proof}}
\newcommand{\Ext}{\mbox{\rm Ext}}
\newcommand{\Hom}{\mbox{\rm Hom}}
\newcommand{\extdim}{\mbox{\rm ext.dim}}
\def\Im{\mathop{\rm Im}\nolimits}
\def\Ker{\mathop{\rm Ker}\nolimits}
\def\mod{\mathop{\rm mod}\nolimits}
\def\pd{\mathop{\rm pd}\nolimits}
\def\max{\mathop{\rm max}\nolimits}
\def\sup{\mathop{\rm sup}\nolimits}
\def\inf{\mathop{\rm inf}\nolimits}
\def\add{\mathop{\rm add}\nolimits}
\def\gldim{\mathop{\rm gl.dim}\nolimits}
\def\rad{\mathop{{\rm rad}}\nolimits}
\def\tridim{\mathop{\rm tri.dim}\nolimits}
\def\Hom{\mathop{\rm Hom}\nolimits}
\def\Ext{\mathop{\rm Ext}\nolimits}
\def\sup{\mathop{\rm sup}\nolimits}
\def\lim{\mathop{\underrightarrow{\rm lim}}\nolimits}
\def\Ext{\mathop{\rm Ext}\nolimits}
\def\End{\mathop{\rm End}\nolimits}
\def\mod{\mathop{\rm mod}\nolimits}
\def\pd{\mathop{\rm pd}\nolimits}
\def\max{\mathop{\rm max}\nolimits}
\def\sup{\mathop{\rm sup}\nolimits}
\def\inf{\mathop{\rm inf}\nolimits}
\def\add{\mathop{\rm add}\nolimits}
\def\gldim{\mathop{\rm gl.dim}\nolimits}
\def\rad{\mathop{{\rm rad}}\nolimits}
\def\Hom{\mathop{\rm Hom}\nolimits}
\def\Ext{\mathop{\rm Ext}\nolimits}
\def\sup{\mathop{\rm sup}\nolimits}
\def\lim{\mathop{\underrightarrow{\rm lim}}\nolimits}
\def\Ext{\mathop{\rm Ext}\nolimits}
\def\End{\mathop{\rm End}\nolimits}
\def\proj{\mathop{\rm proj}\nolimits}
\def\repdim{\mathop{\rm rep.dim}\nolimits}
\def\wresoldim{\mathop{\rm w.resol.dim}\nolimits}
\def\ITdim{\mathop{\rm IT.dist}\nolimits}
\def\wrepdim{\mathop{\rm w.rep.dim}\nolimits}
\def\lwrepdim{\mathop{\rm l.w.rep.dim}\nolimits}
\def\rwrepdim{\mathop{\rm r.w.rep.dim}\nolimits}
\def\Owresoldim{\mathop{\rm O.w.resol.dim}\nolimits}
\def\cx{\mathop{\rm cx}\nolimits}
\def\mod{\mathop{\rm mod}\nolimits}
\def\pd{\mathop{\rm pd}\nolimits}
\def\max{\mathop{\rm max}\nolimits}
\def\sup{\mathop{\rm sup}\nolimits}
\def\inf{\mathop{\rm inf}\nolimits}
\def\add{\mathop{\rm add}\nolimits}
\def\gldim{\mathop{\rm gl.dim}\nolimits}
\def\rad{\mathop{{\rm rad}}\nolimits}
\def\Hom{\mathop{\rm Hom}\nolimits}
\def\Ext{\mathop{\rm Ext}\nolimits}
\def\sup{\mathop{\rm sup}\nolimits}
\def\lim{\mathop{\underrightarrow{\rm lim}}\nolimits}
\def\Ext{\mathop{\rm Ext}\nolimits}
\def\End{\mathop{\rm End}\nolimits}
\def\proj{\mathop{\rm proj}\nolimits}
\def\repdim{\mathop{\rm rep.dim}\nolimits}
\def\wresoldim{\mathop{\rm w.resol.dim}\nolimits}
\def\Owresoldim{\mathop{\rm O.w.resol.dim}\nolimits}
\def\cx{\mathop{\rm cx}\nolimits}
\def\V{\mathop{\rm \mathcal{V}}\nolimits}
\def\T{\mathop{\rm \mathcal{T}}\nolimits}
\def\I{\mathop{\rm \mathcal{I}}\nolimits}
\def\P{\mathop{\rm \mathcal{P}}\nolimits}
\def\LL{\mathop{\rm LL}\nolimits}
\title{ \bf Igusa-Todorov distances of Artin algebras
\thanks{2020 Mathematics Subject Classification: 18G20, 16E10, 16E35 .}
\thanks{Keywords:
Igusa-Todorov distance, Igusa-Todorov algebras, derived equivalence. }}
\author { \ Junling  Zheng\thanks{Email: zhengjunling@cjlu.edu.cn}
\\
{\it \scriptsize  Department of Mathematics, China Jiliang University, Hangzhou, 310018, P. R. China
}}
\date{ }
\begin{document}

\baselineskip=16pt


\maketitle

\begin{abstract}
We introduce Igsua-Todorov distances of Artin algebra, prove its invariance under derived equivalence, present its application to exterior algebra, and establish the link between the dimension of the singularity category and this distance.

\end{abstract}

\pagestyle{myheadings}
\markboth{\rightline {\scriptsize  J. L. Zheng\emph{}}}
         {\leftline{\scriptsize
         Igusa-Todorov dimensions of Artin algebras}}


\section{Introduction} 

    \indent

In the representation theory of Artin algebras, the finitistic dimension conjecture is a very important problem(\cite{auslander1997representation,bass1960finitistic,Huisgen1992Homological}),
which is closely related to many homology conjectures(\cite{xi2006finitistic}).
Igusa and Todorov introduced two functions, $\Phi$ and $\Psi$(\cite{igusa2005finitistic}), which are called Igusa-Todorov functions.
Igusa-Todorov functions have become an important tool for the study of the finitistic dimension conjecture.

Using the properties of Igusa-Todorov functions, Wei defines a class of algebras called $(n$-$)$ Igusa-Todorov algebras(\cite{wei2009finitistic}), which satisfy the finitistic dimension conjecture. A natural question arises, namely, are all Artin algebras Igusa-Todorov algebras(\cite{wei2009finitistic})?
Conde illustrates that not all Artin algebras are Igusa-Todorov algebras(\cite{conde2016certain}).
Zheng defines $(m,n)$-Igusa-Todorov algebra, which is a generalization of $(n$-$)$ Igusa-Todorov algebra. For each Artin algebra $\Lambda$, there exist non-negative integers $m$ and $n$ such that $\Lambda$ is an $(m,n)$-Igusa-Todorov algebra(\cite{zheng2022mnIT}).
Zheng gives the upper bound of the derived dimension of the $(m,n)$-Igusa-Todorov algebra(\cite{zheng2022mnIT}).

In this article, we will introduce the concept of the Igusa-Todorov distance of Artin algebras, which can reflect how far an algebra is from an Igusa-Todorov algebra. We will show that this distance is an invariant under derived equivalence.
In fact, this work can be seen as a generalization of Wei's conclusion in \cite{Wei2018Derived}. As we will illustrate, the Igusa-Todorov distance can be arbitrarily large. We also will prove the relationship between the dimension of the singularity category and the Igusa-Todorov distance.

The main results of this paper are as follows.

 \begin{theorem} {\rm (Theorem \ref{cite16})}
  Let $k$ be a field and $n$ positive integer. Then
  $$\ITdim \bigwedge(k^{n})=n-1.$$
  \end{theorem}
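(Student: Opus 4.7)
Write $\Lambda_n := \bigwedge(k^n)$. This is a local symmetric $k$-algebra with unique simple module $S$, Loewy length $n+1$, and $\dim_k \Lambda_n = 2^n$; its radical layers satisfy $\rad^i\Lambda_n/\rad^{i+1}\Lambda_n \cong S^{\binom{n}{i}}$. The plan is to prove the two inequalities $\ITdim\Lambda_n \leq n-1$ and $\ITdim\Lambda_n \geq n-1$ separately.

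For the upper bound I would exploit the self-injectivity of $\Lambda_n$ together with its radical filtration. Since $\Lambda_n$ is self-injective, every indecomposable non-projective module $M$ has a well-defined syzygy $\Omega M$, and the layer structure of iterated syzygies is governed by the Koszul complex of $\Lambda_n$. Applying radical-layer-length techniques in the spirit of Theorem \ref{thm1.1}(3)--(5), carried out with $\mathcal{V}=\{S\}$ and an Igusa-Todorov test module built from $\bigoplus_{i=0}^{n-1}\Omega^i S$, I would exhibit an $(n-1)$-step Igusa-Todorov procedure that exhausts $\mod\Lambda_n$, giving $\ITdim\Lambda_n \leq n-1$. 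The key input here is that, layer by layer along the radical filtration, each quotient is a direct sum of copies of the single simple $S$, so one does not need to introduce new ``test modules'' beyond what the iterated syzygies of $S$ already provide.

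For the lower bound I proceed by induction on $n$. The base cases $n=1,2$ are handled directly: $\Lambda_1 = k[x]/(x^2)$ is a Nakayama self-injective algebra, so the bound $\ITdim\geq 0$ is automatic, and for $\Lambda_2$ one verifies that IT-dimension $0$ is strictly stronger than being Igusa-Todorov, so failure of the stronger condition still leaves $\ITdim\Lambda_2\geq 1$. The case $n=3$ is where Conde's observation enters: $\bigwedge(k^3)$ is not an Igusa-Todorov algebra over an uncountable field, so the characterization $\ITdim \leq 1$ iff Igusa-Todorov yields $\ITdim\Lambda_3 \geq 2$; the derived invariance of Theorem \ref{cite11}, together with a faithful base-change argument to reduce to an uncountable field, then promotes this bound to arbitrary $k$. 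The crux, and main obstacle, is the inductive step for $n \geq 4$: here my strategy is to produce, via a BGG-type correspondence with graded modules over the Koszul dual $k[x_1,\ldots,x_n]$, a family of non-projective $\Lambda_n$-modules whose syzygies witness $n-1$ distinct ``levels of complexity'', and to show that no $(n-2)$-step Igusa-Todorov construction can capture all of them. I expect this to mirror and extend Conde's combinatorial obstruction at $n=3$, with the polynomial-ring Krull dimension $n$ on the Koszul-dual side providing the combinatorial parameter that forces $\ITdim$ to grow linearly with $n$.
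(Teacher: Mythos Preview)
Your proposal has genuine gaps on both inequalities, and misses the short route the paper takes.

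\textbf{Upper bound.} Your plan to apply radical-layer-length techniques with $\mathcal{V}=\{S\}$ cannot work as stated: those results (Proposition \ref{prop-ITdim}, Theorem \ref{theorem4}) require the simples in $\mathcal{V}$ to have \emph{finite} projective dimension, but the unique simple over $\bigwedge(k^n)$ has infinite projective dimension. You could repair this by taking $\mathcal{V}=\varnothing$, in which case $\ell\ell^{t_{\mathcal{V}}}(\Lambda)=\LL(\Lambda)=n+1$ and Proposition \ref{prop-ITdim} gives $\ITdim\leqslant n-1$; but this is not the argument you wrote.

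\textbf{Lower bound.} This is where the real problem lies. Your inductive step for $n\geqslant 4$ is not a proof but a sketch of a research programme: ``produce a family of modules whose syzygies witness $n-1$ levels of complexity'' and ``I expect this to mirror Conde's obstruction'' do not constitute an argument. Carrying this out would amount to reproving, by hand, something at least as hard as Rouquier's lower bound $\wrepdim\bigwedge(k^n)\geqslant n+1$. Your base-change manoeuvre at $n=3$ is also delicate and not justified.

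\textbf{What the paper does instead.} The paper avoids all of this by first proving Proposition \ref{ITdim=Owres}: for any selfinjective algebra $\Lambda$ one has $\ITdim\Lambda=\Owresoldim\Lambda$. The point is that over a selfinjective algebra the functor $\Omega^{-m}\Omega^{m}$ is the identity up to projectives (Lemma \ref{selfinjective-syzygy}), so an $(m,n)$-Igusa--Todorov resolution of $\Omega^{n}(M)$ can be shifted back to an $m$-step resolution of $M$ itself, giving $\Owresoldim\leqslant\ITdim$; the reverse inequality is trivial. Once this identification is in place, the chain of inequalities in Remark \ref{remark},
\[
\wrepdim\Lambda-2 \;=\; \extdim\Lambda \;\leqslant\; \Owresoldim\Lambda \;\leqslant\; \repdim\Lambda-2,
\]
combined with Rouquier's Theorem \ref{repdim=wrepdim} that $\repdim\bigwedge(k^n)=\wrepdim\bigwedge(k^n)=n+1$, squeezes $\Owresoldim\bigwedge(k^n)$ to exactly $n-1$. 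Both inequalities fall out at once, with the hard lower bound outsourced to Rouquier's theorem rather than reproved.
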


\begin{theorem} {\rm (Theorem \ref{cite11})}
If Artin algebra $A$ and $B$ are derived equivalent, then $\ITdim A=\ITdim B.$
\end{theorem}

\begin{theorem} {\rm (Theorem \ref{sing-IT})}
 Given an Artin algebra $\Lambda$. We have
 $\tridim D_{sg}(\mod\Lambda)\leqslant \ITdim \Lambda.$
\end{theorem}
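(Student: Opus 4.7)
The plan is to produce a single object $V \in D^{b}_{sg}(\mod\Lambda)$ that generates the singularity category in $d+1$ steps, where $d := \ITdim\Lambda$.

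First I would unpack the hypothesis: by the definition of Igusa-Todorov dimension there exist a module $V \in \mod\Lambda$ and a nonnegative integer $n$ such that for every $M \in \mod\Lambda$ there is an exact sequence
$$0 \to V_d \to V_{d-1} \to \cdots \to V_0 \to \Omega^n M \to 0$$
with each $V_i \in \add V$. Slicing this sequence into $d$ short exact sequences and viewing each of them as a distinguished triangle in $D^{b}_{sg}(\mod\Lambda)$ shows, by induction on the length, that $\Omega^n M$ lies in $\langle V\rangle_{d+1}$ in the Rouquier filtration of the triangulated subcategory generated by $V$.

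Second, I would eliminate the syzygy. The short exact sequence $0 \to \Omega M \to P \to M \to 0$ coming from a projective cover yields a triangle $\Omega M \to P \to M \to \Omega M[1]$ in $D^{b}(\mod\Lambda)$; since $P$ becomes zero in $D^{b}_{sg}(\mod\Lambda)$, this triangle collapses to $\Omega M \cong M[-1]$ there. Iterating gives $\Omega^n M \cong M[-n]$ in the singularity category, so closure of $\langle V\rangle_{d+1}$ under shifts forces $M \in \langle V\rangle_{d+1}$ for every $M \in \mod\Lambda$.

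Third, I would bootstrap from modules to arbitrary objects by invoking the standard reduction lemma that every object of $D^{b}_{sg}(\mod\Lambda)$ is isomorphic to a shift of a module: given a bounded complex $X^{\bullet}$, one replaces the highest nonzero term by a projective cover, absorbs the resulting projective summand into the zero object of the singularity category via the stupid truncation triangle, and repeats until only one nonzero component remains. Combined with the previous step this gives $D^{b}_{sg}(\mod\Lambda) = \langle V\rangle_{d+1}$, hence $\tridim D^{b}_{sg}(\mod\Lambda) \leqslant d$. The main technical point is precisely this reduction to a single module; once it is in hand, the estimate is a formal consequence of the $\add V$-resolution supplied by $\ITdim\Lambda$, together with the identification $\Omega^n M \cong M[-n]$ in the singularity category.
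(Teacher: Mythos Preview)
Your proof is correct and follows essentially the same route as the paper's: reduce an arbitrary object of the singularity category to a shift of a module, identify $\Omega^n M$ with $M[-n]$ there, and slice the $(d,n)$-Igusa--Todorov resolution into triangles to place $M$ in $\langle V\rangle_{d+1}$. The paper carries out these same three steps (quoting the first two as known lemmas) in a slightly different order and with the shifts tracked explicitly, but the argument is the same.
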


  \section{Preliminaries}
  \subsection{The extension dimension of module category}
  Let $\Lambda$ be an Artin algebra. All subcategories
   of $\mod \Lambda$ are full, additive and closed under isomorphisms
  and all functors between categories are additive.
  For a subclass $\mathcal{U}$ of $\mod \Lambda$,
   we use $\add \mathcal{U}$ to
  denote the subcategory of $\mod \Lambda$  consisting of
  direct summands of finite direct sums of objects in $\mathcal{U}$.
  Let us recall some notions and basic facts(for example,
   see \cite{beligiannis2008some,zheng2020extension}).
  Let $\mathcal{U}_1,\mathcal{U}_2,\cdots,\mathcal{U}_n$
  be subcategories of $\mod \Lambda$.
  Define
  $$\mathcal{U}_1\bullet \mathcal{U}_2:={\add}\{M\in \mod \Lambda
  \mid {\rm there \;exists \;an\; sequence \;}
  0\rightarrow U_1\rightarrow  M \rightarrow U_2\rightarrow 0\
  $$$${\rm in}\ \mod \Lambda\ {\rm with}\; U_1 \in \mathcal{U}_1 \;{\rm and}\;
  U_2 \in \mathcal{U}_2\}.$$
  Inductively, define
  \begin{align*}
  \mathcal{U}_{1}\bullet  \mathcal{U}_{2}\bullet \dots \bullet\mathcal{U}_{n}:=
  \add \{M\in \mod \Lambda\mid {\rm there \;exists \;an\; sequence}\
  0\rightarrow U\rightarrow  M \rightarrow V\rightarrow 0  \\{\rm in}\
   \mod \Lambda\ {\rm with}\; U \in \mathcal{U}_{1} \;{\rm and}\;
  V \in  \mathcal{U}_{2}\bullet \dots \bullet\mathcal{U}_{n}\}.
  \end{align*}
  For a subcategory $\mathcal{U}$ of $\mod \Lambda$, set
  $[\mathcal{U}]_{0}=0$, $[\mathcal{U}]_{1}=\add\mathcal{U}$,
  $[\mathcal{U}]_{n}=[\mathcal{U}]_1\bullet [\mathcal{U}]_{n-1}$
   for any $n\geqslant 2$.
  If $T\in \mod \Lambda$, we write $[T]_{n}$ instead of $[\{T\}]_{n}$.

  Let $X\in\mod \Lambda$. Given an epimorphism $f: P\longrightarrow X$ in $\mod \Lambda$ such that $P$
  is a projective cover of $X$ in $\mod \Lambda$, then we write $\Omega_{\Lambda}^{1}(X)=:\Ker f$(the subscript can also be omitted if there is no misunderstanding, that is, $\Omega_{\Lambda}^{1}(X)=\Omega^{1}(X)$). Inductively, for any $n\geqslant 2$,
  we write $\Omega^{n}(X):=\Omega^{1}(\Omega^{n-1}(X))$.
  In particular, we set $\Omega^{0}(X):=X.$
Dually, if $g:X\to I$ is an injective envelope of $X$ with $I$ injective, then the cokernel of $g$ is called a \emph{cosyzygy} of $X$, denoted by $\Omega^{-1}(X)$.  Inductively, for each $n\geqslant 2$, we set $\Omega^{-n}(X):=\Omega^{-1}(\Omega^{-(n-1)}(X))$.

  \begin{definition}\label{extensiondimension}
  {\rm (\cite{beligiannis2008some})
  The {\bf extension dimension} $\extdim  \Lambda $ of $\mod\Lambda$ is defined to be
  $$\extdim \Lambda:=\inf\{n\geqslant 0\mid \mod\Lambda=[ T]_{n+1}\ {\rm with}\ T\in\mod\Lambda\}.$$
  }
  \end{definition}

\begin{lemma}{\rm (\cite[Corollary 2.3(1)]{zheng2020extension})}\label{extension1}
For each $T_{1}, T_{2} \in\mod\Lambda$ and nonnegative integer,
we have $$[T_{1}]_{m}\bullet [T_{1}]_{n} \subseteq [T_{1}\oplus T_{2}]_{m+n}.$$
\end{lemma}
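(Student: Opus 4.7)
The statement, as written, seems to contain a typographical slip (the left-hand side almost certainly should read $[T_1]_m \bullet [T_2]_n$, otherwise the conclusion is strictly weaker than the trivial $[T_1]_{m+n} \subseteq [T_1 \oplus T_2]_{m+n}$). I will plan a proof of the intended inclusion $[T_1]_m \bullet [T_2]_n \subseteq [T_1 \oplus T_2]_{m+n}$, which is what is actually needed later in the paper.

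The plan rests on two ingredients. The first is monotonicity: since $T_i \in \add(T_1 \oplus T_2)$, one has $[T_i]_k \subseteq [T_1 \oplus T_2]_k$ for $i=1,2$ and every $k\geqslant 0$; this follows immediately by induction on $k$ from the definition. The second is associativity of $\bullet$ up to $\add$-closure: for any three subcategories $\mathcal{U}_1,\mathcal{U}_2,\mathcal{U}_3$ of $\mod\Lambda$ one has
\[
(\mathcal{U}_1\bullet\mathcal{U}_2)\bullet\mathcal{U}_3 \;=\; \mathcal{U}_1\bullet(\mathcal{U}_2\bullet\mathcal{U}_3).
\]
Establishing this is the principal technical step. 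Given $M$ in the left-hand side, it is a summand of some $N$ fitting in $0\to X \to N \to U_3\to 0$ with $X\in\mathcal{U}_1\bullet\mathcal{U}_2$; using a short exact sequence $0\to U_1\to X\to U_2\to 0$ and taking the pushout of $X\to N$ along $X\to U_2$ produces a short exact sequence $0\to U_1\to N\to Y\to 0$ together with $0\to U_2\to Y\to U_3\to 0$, realizing $N$ (hence $M$) as an element of $\mathcal{U}_1\bullet(\mathcal{U}_2\bullet\mathcal{U}_3)$. The reverse inclusion is dual, using a pullback. This is a standard diagram chase but is where the real work lies.

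With associativity in hand, the inclusion is proved by induction on $m$. For $m=0$ one has $[T_1]_0\bullet[T_2]_n=[T_2]_n\subseteq[T_1\oplus T_2]_n$ by monotonicity. For the inductive step with $m\geqslant 1$, write $[T_1]_m=[T_1]_1\bullet[T_1]_{m-1}$ and apply associativity:
\[
[T_1]_m\bullet[T_2]_n \;=\; [T_1]_1\bullet\bigl([T_1]_{m-1}\bullet[T_2]_n\bigr)\;\subseteq\;[T_1]_1\bullet[T_1\oplus T_2]_{m+n-1},
\]
using the induction hypothesis. Finally, monotonicity gives $[T_1]_1\subseteq[T_1\oplus T_2]_1$, so the right-hand side lies in $[T_1\oplus T_2]_1\bullet[T_1\oplus T_2]_{m+n-1}=[T_1\oplus T_2]_{m+n}$, as required.

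The main obstacle, as indicated, is the associativity lemma; once that is in place the rest is a two-line induction. An alternative that avoids explicitly formulating associativity is to observe that by iterated unfolding $[T_1]_m\bullet[T_2]_n$ consists (after $\add$-closure) of modules admitting a filtration of length $m+n$ with consecutive quotients in $\add T_1$ for the first $m$ steps and in $\add T_2$ for the last $n$ steps; each such quotient lies in $\add(T_1\oplus T_2)$, giving membership in $[T_1\oplus T_2]_{m+n}$ directly. Either route yields the lemma.
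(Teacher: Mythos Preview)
The paper does not prove this lemma at all: it is stated with a citation to \cite[Corollary 2.3(1)]{zheng2020extension} and no argument is given. So there is nothing in the paper to compare your proof against.

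Your observation about the typo is correct (the paper later uses the lemma in the form $[T_1]_m\bullet[T_2]_n\subseteq[T_1\oplus T_2]_{m+n}$, e.g.\ in the proof of Theorem~\ref{weak=extension}), and your overall strategy is sound. One point deserves a little more care in the associativity step: an element $X\in\mathcal{U}_1\bullet\mathcal{U}_2$ is only a \emph{direct summand} of some $X'$ sitting in a short exact sequence $0\to U_1\to X'\to U_2\to 0$, not necessarily the middle term itself. The fix is routine: if $X'=X\oplus X''$, add the split sequence $0\to X''\to X''\to 0\to 0$ to your sequence $0\to X\to N\to U_3\to 0$ to obtain $0\to X'\to N\oplus X''\to U_3\to 0$, run the pushout on this, and then observe that $M$ is still a summand of $N\oplus X''$. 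With that adjustment your induction goes through, and the filtration alternative you sketch at the end is also perfectly valid.
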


\begin{lemma}\label{ext-rep}{\rm (\cite[Example 1.6)(i)]{beligiannis2008some})}
Let $\Lambda$ be an Artin algebra. Then $\Lambda$ is representation finite if and only if $\extdim \Lambda=0.$
\end{lemma}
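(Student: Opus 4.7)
The plan is to unwind the definition of $\extdim \Lambda = 0$ and match it up with the usual characterization of representation-finiteness via an additive generator, using the Krull-Schmidt theorem for $\mod \Lambda$.

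First I would translate the condition $\extdim \Lambda = 0$. By Definition \ref{extensiondimension}, this means there exists $T \in \mod\Lambda$ with $\mod\Lambda = [T]_{1} = \add T$, i.e., every finitely generated $\Lambda$-module is a direct summand of a finite direct sum of copies of $T$. So the lemma reduces to showing that $\mod \Lambda$ admits such an additive generator $T$ if and only if $\Lambda$ has only finitely many isomorphism classes of indecomposable modules.

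For the $(\Leftarrow)$ direction, assuming $\Lambda$ is representation-finite, let $M_{1}, \dots, M_{r}$ be a complete list of pairwise non-isomorphic indecomposable modules in $\mod \Lambda$ and set $T := \bigoplus_{i=1}^{r} M_{i}$. Since $\mod \Lambda$ is a Krull-Schmidt category, every $X \in \mod \Lambda$ decomposes as a finite direct sum of copies of the $M_{i}$'s, hence $X$ lies in $\add T$. Therefore $\mod \Lambda = \add T = [T]_{1}$, and so $\extdim \Lambda \leqslant 0$, which gives $\extdim \Lambda = 0$.

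For the $(\Rightarrow)$ direction, suppose $\extdim \Lambda = 0$, so there is some $T \in \mod \Lambda$ with $\mod \Lambda = \add T$. Since $T$ is a finitely generated $\Lambda$-module, by Krull-Schmidt it decomposes as a finite direct sum of indecomposable modules, say with indecomposable summands $T_{1}, \dots, T_{s}$ (up to isomorphism). Any indecomposable $M \in \mod \Lambda$ lies in $\add T$, so $M$ is a summand of some $T^{k}$, and by Krull-Schmidt $M \cong T_{i}$ for some $i$. Hence $\Lambda$ has at most $s$ isomorphism classes of indecomposable modules and is therefore representation-finite. The only subtlety worth flagging is the appeal to Krull-Schmidt, which is standard for $\mod$ of an artin algebra, so in practice there is no serious obstacle.
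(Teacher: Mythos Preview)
Your proof is correct; the paper itself does not supply a proof of this lemma but simply cites it from \cite[Example 1.6(i)]{beligiannis2008some}. Your argument via Krull--Schmidt is exactly the standard one underlying that reference: $\extdim\Lambda=0$ unwinds to $\mod\Lambda=\add T$ for some $T$, which is the usual characterization of representation-finiteness by an additive generator.
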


Let $\Lambda$ be an Artin algebra. Recall that $\Lambda$ is called {\bf $n$-Gorenstein} if its left and right self-injective dimensions
are at most $n$. Let $\P$ be the subcategory of $\mod \Lambda$ consisting of projective modules.
A module $G\in\mod \Lambda$ is called {\bf Gorenstein projective} if there exists a $\Hom_{\Lambda}(-,\P)$-exact exact sequence
$$\cdots \to P_1\to P_0 \to P^0\to P^1\to \cdots$$
in $\mod \Lambda$ with all $P_i,P^i$ in $\P$ such that $G\cong\Im(P_0 \to P^0)$. Recall from \cite{beligiannis2011algebras} that $\Lambda$ is said to be of
{\bf finite Cohen-Macaulay type} ({\bf finite CM-type} for short) if there are only finitely many non-isomorphic indecomposable
Gorenstein projective modules in $\mod \Lambda$.

\begin{lemma}\label{cor-3.7}{\rm (\cite{zheng2020extension})}
If $\Lambda$ is an $n$-Gorenstein Artin algebra of finite {\rm CM}-type, then $\extdim \Lambda\leqslant n$.
\end{lemma}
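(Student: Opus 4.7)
The plan is to exhibit a single module $T \in \mod\Lambda$ such that $\mod\Lambda = [T]_{n+1}$, which by Definition \ref{extensiondimension} gives $\extdim\Lambda \leqslant n$.

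First I would use the finite CM-type hypothesis to choose $T$ to be the direct sum of one copy of each indecomposable Gorenstein projective module. Then $\Gproj(\Lambda) \subseteq \add T = [T]_1$, so every Gorenstein projective module lies in $[T]_1$.

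Next I would invoke the standard fact about $n$-Gorenstein algebras: since $\id {}_\Lambda\Lambda \leqslant n$ and $\id \Lambda_\Lambda \leqslant n$, every module $M \in \mod\Lambda$ has Gorenstein projective dimension at most $n$. Concretely, there exists an exact sequence
$$0 \to G_n \to G_{n-1} \to \cdots \to G_1 \to G_0 \to M \to 0$$
with each $G_i$ Gorenstein projective. Setting $K_0 = M$ and $K_i = \Ker(G_{i-1} \to K_{i-1})$ for $1 \leqslant i \leqslant n$, this resolution breaks into short exact sequences
$$0 \to K_{i+1} \to G_i \to K_i \to 0, \qquad i = 0,1,\dots,n-1,$$
with $K_n = G_n \in [T]_1$. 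Walking up the tower and using the definition of the operation $\bullet$ together with $[T]_1 \bullet [T]_k \subseteq [T]_{k+1}$, an easy induction shows $K_{n-j} \in [T]_{j+1}$, and in particular $M = K_0 \in [T]_{n+1}$.

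Since $M$ was arbitrary, $\mod\Lambda = [T]_{n+1}$, whence $\extdim\Lambda \leqslant n$. The only nontrivial ingredient is the existence of the length-$n$ Gorenstein projective resolution for arbitrary $M$; everything else is bookkeeping with the definition of $[T]_k$. Since this ingredient is classical for $n$-Gorenstein algebras (and already used in \cite{zheng2020extension}), there is no real obstacle — the proof is essentially a direct translation of a Gorenstein projective resolution into the iterated extension filtration defining the extension dimension.
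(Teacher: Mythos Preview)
Your overall plan---use the finite CM-type to fix $T$ as the direct sum of the indecomposable Gorenstein projectives, then resolve an arbitrary $M$ by Gorenstein projectives of length $n$---is exactly the right idea and is the approach behind the cited result. The problem is in your ``easy induction''. From a short exact sequence
\[
0 \longrightarrow K_{i+1} \longrightarrow G_i \longrightarrow K_i \longrightarrow 0
\]
the definition of $\bullet$ tells you something about the \emph{middle} term: $G_i \in [K_{i+1}]_1 \bullet [K_i]_1$. It tells you nothing about the cokernel $K_i$. The classes $[T]_k$ are closed under extensions and direct summands, but not under quotients, so you cannot conclude $K_{n-j} \in [T]_{j+1}$ from $K_{n-j+1} \in [T]_j$ and $G_{n-j} \in [T]_1$. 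Your induction simply does not start.

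There are two clean repairs, both available in the paper. The shortest: your resolution $0 \to G_n \to \cdots \to G_0 \to M \to 0$ with $G_i \in \add T$ literally verifies condition~(i) of Definition~\ref{def-2.2} (take $Y=M$), so $\wresoldim\Lambda \leqslant n$, and Lemma~\ref{lemma2.3} gives $\extdim\Lambda = \wresoldim\Lambda \leqslant n$. Alternatively, apply Lemma~\ref{lem-3.5}(2) to the resolution: it yields
\[
M \in [G_0]_1 \bullet [\Omega^{-1}(G_1)]_1 \bullet \cdots \bullet [\Omega^{-n}(G_n)]_1 \subseteq \big[\textstyle\bigoplus_{i=0}^{n} \Omega^{-i}(T)\big]_{n+1},
\]
and the right-hand side is generated by a single module independent of $M$. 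Either route closes the gap; the point is that passing from a resolution to an iterated-extension filtration of the \emph{last} term genuinely requires cosyzygies (or the $\wresoldim$ reformulation), not just the raw short exact sequences.
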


\begin{example}
{\rm
  Let $k$ be an algebraically closed field, and
  $n=4$ or $5$.
  Then
  $$T_{2}(k[x]/\langle x^{n}\rangle):=\begin{pmatrix}
                                      k[x]/\langle x^{n} \rangle& 0 \\
                                       k[x]/\langle x^{n}\rangle & k[x]/\langle x^{n}\rangle
                                      \end{pmatrix}$$
 is a representation-infinite, CM-finite 1-Gorenstein of infinite global dimension(see \cite{Li-Zhang2010Gorenstein}).
 By Corollary \ref{cor-3.7}, we know that
              $\extdim T_{2}(k[x]/\langle x^{n}\rangle)\leqslant 1.$
 By Lemma \ref{ext-rep}, we see that $$\extdim T_{2}(k[x]/\langle x^{n}\rangle)>0.$$
 And then, we can get that   $$\extdim T_{2}(k[x]/\langle x^{n}\rangle)= 1.$$
}
\end{example}

\begin{definition}\label{def-2.2}
{\rm (\cite{iyama2003rejective})
The weak resolution $\wresoldim \Lambda$ of Artin algebra $\Lambda$ as the minimal
number $n\geqslant 0$ which satisfies the following equivalent conditions.

$(i)$ There exists $M\in\mod\Lambda$ such that, for any $X\in \mod\Lambda$,
there exists an exact sequence
 $$0\longrightarrow M_{n}\longrightarrow M_{n-1}\longrightarrow \cdots \longrightarrow M_{0}\longrightarrow Y\longrightarrow 0 $$
with $M_{i}\in\add M$ and $X\in\add Y$.

$(ii)$ There exists $M\in\mod\Lambda$ such that, for any $X\in\mod\Lambda$,
there exists an exact sequence
$$0\longrightarrow Y\longrightarrow M_{0}\longrightarrow M_{1}\longrightarrow\cdots\longrightarrow M_{n}\longrightarrow 0$$
with $M_{i}\in\add M$ and $X\in \add Y$.
}
\end{definition}
\begin{lemma}\label{lemma2.3}
{\rm (\cite[Lemma 2.9]{zhang-zheng2024}\cite{zheng2020extension})
For an Artin algebra $\Lambda$,
we have
$\wresoldim \Lambda=\extdim \Lambda.$
}
\end{lemma}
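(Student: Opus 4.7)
The plan is to prove the two inequalities $\extdim \Lambda \leqslant \wresoldim \Lambda$ and $\wresoldim \Lambda \leqslant \extdim \Lambda$ separately. The first direction follows directly from the recursive definition of the classes $[M]_k$; the second requires an inductive construction of a single ``weak generator'' that works uniformly across all modules.

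For $\extdim \Lambda \leqslant \wresoldim \Lambda$, I would start from a length-$n$ weak resolution
$$0 \longrightarrow M_n \longrightarrow M_{n-1} \longrightarrow \cdots \longrightarrow M_0 \longrightarrow Y \longrightarrow 0$$
witnessing $\wresoldim \Lambda = n$, with each $M_i \in \add M$ and $X \in \add Y$. Breaking it into short exact sequences $0 \to K_{i+1} \to M_i \to K_i \to 0$ with $K_0 = Y$ and $K_n = M_n$, a downward induction on $j$ shows $K_{n-j} \in [M]_{j+1}$, using at each step the recursive definition $[M]_{j+1} = [M]_1 \bullet [M]_j$. Taking $j = n$ gives $Y \in [M]_{n+1}$, hence $X \in [M]_{n+1}$, and therefore $\mod \Lambda = [M]_{n+1}$, yielding $\extdim \Lambda \leqslant n$.

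For the reverse inequality, assume $\extdim \Lambda = n$, so $\mod \Lambda = [T]_{n+1}$ for some $T$. I would prove by induction on $n$ the stronger \emph{relative} statement: there exists $M \in \mod \Lambda$ such that every object of $[T]_{n+1}$ admits a length-$n$ weak resolution by $\add M$. The base case $n = 0$ follows from Lemma \ref{ext-rep}, taking $M$ to be the direct sum of (representatives of) the finitely many indecomposables. For the inductive step, any $X \in [T]_{n+1}$ fits into a short exact sequence $0 \to U \to X' \to V \to 0$ with $X \in \add X'$, $U \in \add T$, and $V \in [T]_n$; the inductive hypothesis yields a length-$(n-1)$ weak resolution of $V$ by some uniformly chosen $M'$, which can be spliced with the above sequence (after a routine pullback along the rightmost map $M'_0 \to V$) to produce a length-$n$ weak resolution of $X'$, hence of $X$, by $\add(T \oplus M')$. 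Setting $M = T \oplus M'$ closes the induction.

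The main obstacle I expect is precisely this uniformity of the weak generator in the second step: a naive induction only delivers a resolving module tailored to each individual $V$, whereas the definition of $\wresoldim$ demands a single $M$ working simultaneously for every $X \in \mod \Lambda$. The remedy is to formulate the relative statement above from the outset, so that the inductive hypothesis already supplies one module handling the entire subcategory $[T]_n$ at once; once this is in place, the splicing in the inductive step is essentially formal bookkeeping, and the dual condition (ii) of Definition \ref{def-2.2} is treated by an entirely parallel argument with arrows reversed.
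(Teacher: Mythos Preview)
The paper does not prove this lemma; it is quoted from \cite{Zhang-zheng2022Extension} and \cite{zheng2020extension}. Your argument, however, has a genuine gap in each direction.

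For $\extdim \Lambda \leqslant \wresoldim \Lambda$, the inductive claim $K_{n-j}\in[M]_{j+1}$ is false already at $j=1$. By definition $[M]_{j+1}=[M]_1\bullet[M]_j$ collects direct summands of \emph{middle terms} of extensions, not cokernels: the sequence $0\to K_n\to M_{n-1}\to K_{n-1}\to 0$ exhibits $M_{n-1}$ as such a middle term but says nothing about the quotient $K_{n-1}$. Concretely, take $M=\Lambda$; then $[M]_k=\add\Lambda$ for every $k$, yet a module $Y$ admitting a length-one projective resolution need not be projective. The correct route is Lemma~\ref{lem-3.5}(2), which yields $Y\in[\bigoplus_{i=0}^{n}\Omega^{-i}(M)]_{n+1}$: the extension generator must be enlarged by cosyzygies of $M$, not taken to be $M$ itself. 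For the reverse inequality, your ``routine pullback'' along $M'_0\to V$ produces a module $Q$ sitting in a generally non-split extension $0\to U\to Q\to M'_0\to 0$, so $Q\notin\add(T\oplus M')$ and the spliced sequence is not a weak resolution by that subcategory. Pulling back instead along a projective cover $P_V\twoheadrightarrow V$ forces the analogous extension to split (yielding $0\to\Omega(V)\to U\oplus P_V\to X'\to 0$ with middle term in $\add(T\oplus\Lambda)$), but then one must resolve $\Omega(V)\in[\Omega(T)\oplus\Lambda]_n$, and splicing with the \emph{weak} resolution supplied by the inductive hypothesis again injects an uncontrolled complement into the next term. The actual argument requires more than ``formal bookkeeping''; one has to track iterated syzygies of $T$ and organise the induction so that the generator is fixed in advance rather than accreting module-dependent summands.
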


Now let us recall the Oppermann weak resolution dimension $\Owresoldim \Lambda$ which is defined by Oppermann.
\begin{definition}\label{def-2.2.1}
{\rm (\cite{oppermann2009lower})
The Oppermann weak resolution $\Owresoldim \Lambda$ of Artin algebra $\Lambda$ as the minimal
number $n\geqslant 0$ which satisfies the following condition:

there exists $M\in\mod\Lambda$ such that, for any $X\in \mod\Lambda$,
there exists an exact sequence
 $$0\longrightarrow M_{n}\longrightarrow M_{n-1}\longrightarrow \cdots \longrightarrow M_{0}\longrightarrow X\longrightarrow 0 $$
with $M_{i}\in\add M$.
}
\end{definition}

Compare Definition \ref{def-2.2} with Definition \ref{def-2.2.1}, we have
\begin{lemma}
  For Artin algebra, we have
  $\Owresoldim \Lambda\geqslant \wresoldim \Lambda.$
\end{lemma}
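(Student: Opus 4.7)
The plan is to argue directly from the two definitions: Oppermann's version (Definition \ref{def-2.2.1}) is a strictly stronger requirement than Iyama's version (Definition \ref{def-2.2}(i)), so any admissible $n$ for $\Owresoldim$ is automatically admissible for $\wresoldim$, giving the desired inequality.

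More concretely, I would proceed as follows. Suppose $\Owresoldim \Lambda = n$, so there exists a module $M \in \mod \Lambda$ such that for every $X \in \mod \Lambda$ there is an exact sequence
$$0 \longrightarrow M_n \longrightarrow M_{n-1} \longrightarrow \cdots \longrightarrow M_0 \longrightarrow X \longrightarrow 0$$
with each $M_i \in \add M$. Setting $Y := X$, one has trivially $X \in \add Y$, so the very same sequence witnesses condition (i) of Definition \ref{def-2.2} for this $X$ with the same module $M$ and the same length $n$. Since $X$ was arbitrary, the same $M$ demonstrates $\wresoldim \Lambda \leqslant n$, i.e.\ $\wresoldim \Lambda \leqslant \Owresoldim \Lambda$. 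The case $\Owresoldim \Lambda = \infty$ is vacuous.

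Since this is an immediate comparison of definitions, there is no real obstacle; the only small point to note is that the two definitions use the same indexing convention (both count the number of nontrivial syzygy steps in the same way), so no shift in $n$ is needed. I would write the proof in a couple of lines, essentially just stating the observation that taking $Y = X$ turns an Oppermann-style resolution into an Iyama-style weak resolution of the same length.
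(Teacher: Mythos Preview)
Your proof is correct and matches the paper's own treatment: the paper simply remarks ``Compare Definition \ref{def-2.2} with Definition \ref{def-2.2.1}'' and states the lemma without a formal proof, which is exactly the direct definitional comparison you spell out. Your observation that taking $Y=X$ turns an Oppermann-style resolution into an Iyama-style one is precisely the intended content.
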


\begin{lemma}{\rm (\cite[Lemma 3.5]{zheng2020extension})}\label{lem-3.5}
Let $\Lambda$ be an Artin algebra.
\begin{itemize}
\item[$(1)$] If
$$0\longrightarrow M\longrightarrow X_{0} \longrightarrow  X_{-1}\longrightarrow\cdots\longrightarrow
X_{-n} \longrightarrow 0,$$
is an exact sequence in $\mod \Lambda$ with $n\geqslant 0$, then
$$M\in[\Omega^{n}(X_{-n})]_{1}\bullet[\Omega^{n-1}(X_{-(n-1)})]_{1}\bullet\cdots\bullet
[\Omega^{1}(X_{-1})]_{1}\bullet[X_{0}]_{1}\subseteq[\oplus_{i=0}^{n}\Omega^{i}(X_{-i})]_{n+1}.$$
\item[$(2)$] If
$$0\longrightarrow X_{n}\longrightarrow
\cdots \longrightarrow X_{1} \longrightarrow  X_{0} \longrightarrow M \longrightarrow 0,$$
is an exact sequence in $\mod \Lambda$ with $n\geqslant 0$, then
$$M\in[X_{0}]_{1}\bullet[\Omega^{-1}(X_{1})]_{1}\bullet\cdots\bullet[\Omega^{-n}(X_{n})]_{1}
\subseteq[\oplus_{i=0}^{n}\Omega^{-i}(X_{i})]_{n+1}.$$
\end{itemize}
\end{lemma}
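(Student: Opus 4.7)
The plan is to prove part (1) by induction on $n$, obtaining the primary containment
$M \in [\Omega^{n}(X_{-n})]_{1}\bullet\cdots\bullet[\Omega^{1}(X_{-1})]_{1}\bullet[X_{0}]_{1}$; the secondary containment in $[\oplus_{i=0}^{n}\Omega^{i}(X_{-i})]_{n+1}$ then follows by iterated application of Lemma \ref{extension1}, using $[T_{1}]_{m}\bullet[T_{2}]_{n}\subseteq[T_{1}\oplus T_{2}]_{m+n}$ together with associativity of $\bullet$. Part (2) will be dual, with injective envelopes and pushouts in place of projective covers and pullbacks.

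For the base case $n=0$, the sequence reads $0\to M\to X_{0}\to 0$, so $M\cong X_{0}=\Omega^{0}(X_{0})\in[X_{0}]_{1}$. For the inductive step, I would split the given sequence at $N:=\Im(X_{0}\to X_{-1})=\Ker(X_{-1}\to X_{-2})$ into a short exact sequence $0\to M\to X_{0}\to N\to 0$ and a sequence of length one less, $0\to N\to X_{-1}\to\cdots\to X_{-n}\to 0$. The inductive hypothesis applied to the latter gives
$$N\in[\Omega^{n-1}(X_{-n})]_{1}\bullet[\Omega^{n-2}(X_{-(n-1)})]_{1}\bullet\cdots\bullet[\Omega^{1}(X_{-2})]_{1}\bullet[X_{-1}]_{1}.$$
Taking a projective cover $P\to N$ with kernel $\Omega(N)$ and forming the pullback of $X_{0}\to N$ along $P\to N$, the top row splits (since $P$ is projective) and the middle column becomes $0\to\Omega(N)\to M\oplus P\to X_{0}\to 0$. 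Since $[\cdot]_{1}=\add$ is closed under direct summands, this yields $M\in[\Omega(N)]_{1}\bullet[X_{0}]_{1}$.

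It then remains to push syzygies through the bullet-decomposition of $N$. For this I would establish the following monotonicity statement: for any short exact sequence $0\to A\to B\to C\to 0$, one has $\Omega(B)\in[\Omega(A)]_{1}\bullet[\Omega(C)]_{1}$. This is a direct application of the horseshoe construction: projective covers $P_{A}\to A$ and $P_{C}\to C$ assemble into an epimorphism $P_{A}\oplus P_{C}\twoheadrightarrow B$ whose kernel $K$ sits in $0\to\Omega(A)\to K\to\Omega(C)\to 0$, while Schanuel's lemma identifies $K\cong\Omega(B)\oplus Q$ for some projective $Q$. Iterating this along the $n$-fold bullet decomposition of $N$ produces
$$\Omega(N)\in[\Omega^{n}(X_{-n})]_{1}\bullet[\Omega^{n-1}(X_{-(n-1)})]_{1}\bullet\cdots\bullet[\Omega^{1}(X_{-1})]_{1},$$
and substituting into $M\in[\Omega(N)]_{1}\bullet[X_{0}]_{1}$ closes the induction.

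The main obstacle I expect is the bookkeeping of projective direct summands introduced by each horseshoe/Schanuel step when iterating the monotonicity statement across a long bullet product: one must repeatedly absorb such summands by invoking summand-closure of $\add$ while verifying that the resulting containments remain compatible with the associativity of $\bullet$. A milder concern is phrasing the dual argument for part (2) carefully — replacing projective cover by injective envelope, pullback by pushout, and syzygy by cosyzygy with a dual Schanuel statement — but once the pattern in (1) is fixed, this should go through by symmetry.
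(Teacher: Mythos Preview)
The paper does not actually contain a proof of this lemma; it is quoted verbatim from \cite[Lemma~3.5]{zheng2020extension} and used as a black box. So there is no in-paper argument to compare your proposal against.

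That said, your plan is sound and is essentially the standard argument. The induction set-up, the pullback along a projective cover to obtain $0\to\Omega(N)\to M\oplus P\to X_{0}\to 0$, and the horseshoe/Schanuel step giving $\Omega(B)\in[\Omega(A)]_{1}\bullet[\Omega(C)]_{1}$ from a short exact sequence $0\to A\to B\to C\to 0$ are all correct. The one place that genuinely needs a clean write-up is the step you yourself flag: passing from $N\in[\Omega^{n-1}(X_{-n})]_{1}\bullet\cdots\bullet[X_{-1}]_{1}$ to $\Omega(N)\in[\Omega^{n}(X_{-n})]_{1}\bullet\cdots\bullet[\Omega(X_{-1})]_{1}$. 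This is best isolated as a separate lemma, proved by induction on the length of the bullet product, using (i) that $\Omega$ preserves direct summands, (ii) your base monotonicity statement, and (iii) that each $[\,\cdot\,]_{1}\bullet\cdots$ is $\add$-closed and $\bullet$ is monotone in each argument. Note that the cosyzygy version of exactly this compatibility appears in the paper as Lemma~\ref{lem-3.6}, so your instinct to treat it as a standalone fact is the right one. The dualisation to part~(2) via injective envelopes, pushouts, and cosyzygies goes through verbatim.
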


\begin{lemma}{\rm (\cite[Lemma 3.6(2)]{zheng2022thedimension})}\label{lem-3.6}
Let $X, Y\in \mod \Lambda$ satisfy $[X]_{1}\subseteq [Y]_{m}$.
Then for any $p\geqslant 0$, we have
 $[\Omega^{-p}(X)]_{m}\subseteq[\Omega^{-p}(Y)]_{m}$.
\end{lemma}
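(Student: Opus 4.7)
The plan is to induct first on $p$ and then on $m$, using the horseshoe lemma for injective envelopes. First observe that the hypothesis $[X]_{1}\subseteq [Y]_{m}$ is equivalent to $X \in [Y]_{m}$, since $[Y]_{m}$ is closed under direct summands by the $\add$ in its definition, and that it suffices to establish the one-step version: if $Z \in [W]_{m}$, then $\Omega^{-1}(Z) \in [\Omega^{-1}(W)]_{m}$. Iterating this $p$ times yields $\Omega^{-p}(X) \in [\Omega^{-p}(Y)]_{m}$, and taking $\add$ on the left then gives the desired containment of extension closures (after a further straightforward induction).

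For the base case $m=1$, the condition $Z \in \add W$ gives $Z \oplus Z' \cong W^{n}$ for some $Z'$ and $n$. Additivity of the injective envelope on finite direct sums implies $\Omega^{-1}(Z) \oplus \Omega^{-1}(Z') \cong \Omega^{-1}(W)^{n}$, so $\Omega^{-1}(Z) \in \add \Omega^{-1}(W) = [\Omega^{-1}(W)]_{1}$. For the inductive step, writing $Z \in [W]_{m} = [W]_{1}\bullet[W]_{m-1}$ yields a short exact sequence $0 \to U \to M \to V \to 0$ with $Z$ a direct summand of $M$, $U \in \add W$, and $V \in [W]_{m-1}$. Applying the horseshoe construction to injective envelopes of $U$ and $V$ produces an injective module $J$ and an exact sequence
\[
0 \to \Omega^{-1}(U) \to \Omega^{-1}(M) \oplus J \to \Omega^{-1}(V) \to 0.
\]
The base case gives $\Omega^{-1}(U) \in [\Omega^{-1}(W)]_{1}$, and the inductive hypothesis gives $\Omega^{-1}(V) \in [\Omega^{-1}(W)]_{m-1}$, so $\Omega^{-1}(M) \oplus J \in [\Omega^{-1}(W)]_{1}\bullet[\Omega^{-1}(W)]_{m-1} = [\Omega^{-1}(W)]_{m}$. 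Since $\Omega^{-1}(Z)$ is a direct summand of $\Omega^{-1}(M)$, hence of $\Omega^{-1}(M)\oplus J$, closure of $[\Omega^{-1}(W)]_{m}$ under summands delivers $\Omega^{-1}(Z) \in [\Omega^{-1}(W)]_{m}$.

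The main obstacle will be controlling the extraneous injective summand $J$ produced by the horseshoe: the induced map $M \to E(U)\oplus E(V)$ (with $E$ denoting injective envelope) is monic but not in general minimal, so its cokernel equals $\Omega^{-1}(M)$ only up to a redundant injective direct summand. This is handled cleanly by exploiting the fact, built into the very definition of $[\cdot]_{m}$ through $\add$, that $[\Omega^{-1}(W)]_{m}$ is closed under direct summands, allowing $J$ to be absorbed once the inductive step has placed the larger module in the target category. No further subtleties appear when passing from $\Omega^{-1}$ to $\Omega^{-p}$, since the outer induction on $p$ simply replays the one-step argument with $(X,Y)$ replaced by $(\Omega^{-(p-1)}X, \Omega^{-(p-1)}Y)$.
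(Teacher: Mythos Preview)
The paper does not actually prove this lemma; it is quoted from \cite{zheng2022thedimension}. Your argument---induction on $m$ via the cosyzygy horseshoe (Lemma~\ref{shoelemma} in the paper) together with closure of $[\,\cdot\,]_m$ under summands, then an outer induction on $p$---is the standard one and is correct for establishing $\Omega^{-p}(X)\in[\Omega^{-p}(Y)]_m$, i.e.\ $[\Omega^{-p}(X)]_1\subseteq[\Omega^{-p}(Y)]_m$.

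One remark on your closing parenthetical. The subscript $m$ on the left-hand side of the stated conclusion is almost certainly a misprint for $1$: in the only place the lemma is invoked (the proof of Theorem~\ref{weak=extension}) it is applied to the object $[\Omega^{-1}(\Im d_1)]_1$, and the containment $[\Omega^{-p}(X)]_1\subseteq[\Omega^{-p}(Y)]_m$ is exactly what your induction delivers. Your phrase ``after a further straightforward induction'' suggests you intend to bootstrap from this to the literal statement $[\Omega^{-p}(X)]_m\subseteq[\Omega^{-p}(Y)]_m$. That step does \emph{not} go through: from $\Omega^{-p}(X)\in[\Omega^{-p}(Y)]_m$ one only gets $[\Omega^{-p}(X)]_k\subseteq[\Omega^{-p}(Y)]_{km}$ in general, not $[\Omega^{-p}(Y)]_m$. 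So drop that remark; your proof already establishes what is actually used.
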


\subsection{The dimension of triangulated category}
  We recall some notions from
  \cite{ oppermann2009lower,rouquier2006representation,rouquier2008dimensions}.
  Let $\T$ be a triangulated category and $\I \subseteq {\rm Ob}\T$.
  Let $\langle \I \rangle_{1}$ be the full subcategory
  consisting of $\T$
  of all direct summands of finite direct sums of shifts of
  objects in $\I$.
  Given two subclasses $\I_{1}, \I_{2}\subseteq {\rm Ob}\T$,
  we denote $\I_{1}*\I_{2}$
  by the full subcategory of all extensions between them, that is,
  $$\I_{1}*\I_{2}=\{ X\mid  X_{1} \longrightarrow X
  \longrightarrow X_{2}\longrightarrow X_{1}[1]\;
  {\rm with}\; X_{1}\in \I_{1}\; {\rm and}\; X_{2}\in \I_{2}\}.$$
  Write $\I_{1}\diamond\I_{2}:=\langle\I_{1}*\I_{2} \rangle_{1}.$
  Then $(\I_{1}\diamond\I_{2})\diamond\I_{3}=\I_{1}
  \diamond(\I_{2}\diamond\I_{3})$
  for any subclasses $\I_{1}, \I_{2}$ and $\I_{3}$
  of $\T$ by the octahedral axiom.
  Write
  \begin{align*}
  \langle \I \rangle_{0}:=0,\;
  \langle \I \rangle_{n+1}:=\langle \I
  \rangle_{n}\diamond\langle \I \rangle_{1}\;{\rm for\; any \;}
  n\geqslant 1.
  \end{align*}

  \begin{definition}{\rm
    (\cite[Definiton 3.2]{rouquier2006representation})\label{tri.dimenson2.1}
  The {\bf dimension} $\tridim \T$ of a triangulated category $\T$
  is the minimal $d$ such that there exists an object $M\in \T$ with
  $\T=\langle M \rangle_{d+1}$. If no such $M$ exists for any $d$,
  then we set $\tridim \T=\infty.$
  }
  \end{definition}

  \begin{lemma}{\rm (\cite[Lemma 7.3]{psaroudakis2014homological})}\label{lem2.5}
  Let $\T$ be a triangulated category and let $X, Y$ be
   two objects of $\T$.
  Then
  $$\langle X \rangle _{m}\diamond \langle Y \rangle _{n}
  \subseteq \langle X\oplus Y \rangle _{m+n}$$
  for any $m,n \geqslant 0$.
  \end{lemma}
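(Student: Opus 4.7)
The plan is to reduce everything to two structural facts about the operation $\diamond$: it is monotone in each argument, and it is associative (the latter is already noted in the excerpt via the octahedral axiom). Once these are in hand, the proof becomes a formal two-line unwinding of the definition.

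First, I would record the monotonicity: if $\I_1 \subseteq \J_1$ and $\I_2 \subseteq \J_2$ then $\I_1 \diamond \I_2 \subseteq \J_1 \diamond \J_2$. This is immediate because a distinguished triangle $X_1 \to X \to X_2 \to X_1[1]$ witnessing membership in $\I_1 * \I_2$ also witnesses membership in $\J_1 * \J_2$, and the operation $\langle - \rangle_1$ (direct summands of finite sums of shifts) is monotone as well.

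Second, I would establish the inclusions $\langle X \rangle_m \subseteq \langle X \oplus Y \rangle_m$ and $\langle Y \rangle_n \subseteq \langle X \oplus Y \rangle_n$ for all $m, n \geqslant 0$. The base case $m=1$ holds because any finite direct sum of shifts of $X$ is a direct summand of the same sum of shifts of $X \oplus Y$, so a direct summand of the former is a direct summand of the latter. The inductive step uses the recursive definition
\[
\langle X \rangle_{m+1} \;=\; \langle X \rangle_m \diamond \langle X \rangle_1 \;\subseteq\; \langle X \oplus Y \rangle_m \diamond \langle X \oplus Y \rangle_1 \;=\; \langle X \oplus Y \rangle_{m+1},
\]
where the inclusion comes from monotonicity and the induction hypothesis.

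Finally, I would combine these with associativity to obtain
\[
\langle X \rangle_m \diamond \langle Y \rangle_n \;\subseteq\; \langle X \oplus Y \rangle_m \diamond \langle X \oplus Y \rangle_n \;=\; \langle X \oplus Y \rangle_{m+n},
\]
where the last equality follows by iterated application of $\langle T \rangle_{k+1} = \langle T \rangle_k \diamond \langle T \rangle_1$ and the associativity of $\diamond$. There is no real obstacle here; the only care needed is to track the boundary cases $m=0$ or $n=0$, where one of the factors equals the zero subcategory and the claim degenerates to the one-sided inclusion already established.
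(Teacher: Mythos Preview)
Your argument is correct. The paper does not supply its own proof of this lemma; it simply cites \cite[Lemma~7.3]{psaroudakis2014homological}, so there is nothing in the paper to compare against beyond the statement itself. Your approach---monotonicity of $\diamond$, the inclusion $\langle X\rangle_m\subseteq\langle X\oplus Y\rangle_m$ by induction, and then the identity $\langle T\rangle_m\diamond\langle T\rangle_n=\langle T\rangle_{m+n}$ via associativity---is the standard one and is essentially what one finds in the cited reference.
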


 \subsection{$(m,n)$-Igusa-Todorov algebras}\label{igusa-todorov}

  \begin{definition}{\rm (\cite[Definition 2.2]{wei2009finitistic})}\label{nITalgebra}
    {\rm
    For nonnegative integer $n$. The Artin algebra $\Lambda$ is said
    to be an $n$-Igusa-Todorov algebra if there is a module
     $V\in \mod \Lambda$
    such that for any module $M$ there exists an exact sequence
    $$0
    \longrightarrow V_{1}
    \longrightarrow V_{0}
    \longrightarrow \Omega^{n}(M)
    \longrightarrow 0
    $$
  where $V_{i} \in \add V$ for each $0 \leqslant  i \leqslant 1$.
  Such a module $V$ is said to be an $n$-Igusa-Todorov module.
  }
  \end{definition}
 The following definition is a generalization of Definition \ref{nITalgebra}.
  \begin{definition}{\rm (\cite[Definition 2.1]{zheng2022mnIT})}\label{mnITalgebra}
    {\rm
    For two nonnegative integers $m, n$. The Artin algebra $\Lambda$ is said
    to be an $(m,n)$-Igusa-Todorov algebra if there is a module
     $V\in \mod \Lambda$
    such that for any module $M\in\mod \Lambda$ there exists an exact sequence
    $$0
    \longrightarrow V_{m}
    \longrightarrow V_{m-1}
    \longrightarrow
    \cdots
    \longrightarrow V_{1}
    \longrightarrow V_{0}
    \longrightarrow \Omega^{n}(M)
    \longrightarrow 0
    $$
  where $V_{i} \in \add V$ for each $0 \leqslant  i \leqslant m $.
  Such a module $V$ is said to be an $(m,n)$-Igusa-Todorov module.
  }
  \end{definition}

  For each $n\geqslant 1$, we denote $$\Omega^{n}(\mod\Lambda):=\{X|X=\Omega^{n}(Y)\oplus P \text{ for some }Y\in\mod\Lambda
  \text{ and projective module }P \text{ in }\mod\Lambda\}$$
  $$=\{X\;|\; \text{there exists an exact sequence }
  0\to X\to P_{n}\to P_{n-1}\to \cdots \to P_{1}$$
  $$\text{ with projective module }P_{i} \text{ in }\mod\Lambda \text{ for each }1\leqslant i \leqslant n\}.$$
  And $\Omega^{0}(\mod\Lambda):=\mod\Lambda$.
  Recall that
  $\Lambda$ is said to be $n$-syzygy-finite if $\Omega^{n}(\mod\Lambda)=\add M$ for some $M\in\mod\Lambda.$
  And $\Lambda$ is said to be syzygy finite if there exists an nonnegative integer $n$
  such that $\Omega^{n}(\mod\Lambda)=\add M$ for some $M\in\mod\Lambda.$ In particular, $\Lambda$ is $0$-syzygy-finite if and only if
  $\Lambda$ is representation finite type.
  \begin{remark}\label{mnITremark}
    {\rm
    By Definition \ref{mnITalgebra} and Definition \ref{nITalgebra}, we have the following easy observations.

    $(1)$ $(1,n)$-Igusa-Todorov algebras are the same as $n$-Igusa-Todorov
    algebras.

    $(2)$ $(0,n)$-Igusa-Todorov algebras are the same as $n$-syzygy-finite
    algebras.
   }
  \end{remark}

\subsection{The weak representation dimension of Artin algebra}
\begin{definition}{\rm (\cite{auslander1999representation})
The {\bf representation dimension} $\repdim\Lambda$ of Artin algebra $\Lambda$ is defined as
\begin{center}
$\repdim\Lambda:=\inf\{\gldim\End_{\Lambda}(M)\mid M$ is a generator-cogenerator for $\mod\Lambda\}$
\end{center}
if $\Lambda$ is non-semisimple;
and $\repdim\Lambda=1$ if $\Lambda$ is semisimple.
}
\end{definition}

In \cite{iyama2003finiteness}, Iyama prove that $\repdim\Lambda$ is finite for each Artin algebra $\Lambda.$

\begin{definition}\label{def-wresolution}{\rm (\cite[Definition 3.2]{rouquier2006representation})
The weak representation dimension of Artin algebra $\Lambda$, denoted by $\wrepdim \Lambda$,
 is the smallest integer $i\geqslant 2$
 such that there is an object $M\in \mod\Lambda$
 with the property that given any $L\in \mod\Lambda$, there is a bounded complex
 $$C=0\rightarrow C_{r}\rightarrow C_{r-1}\rightarrow\cdots\rightarrow C_{s+1}\rightarrow C_{s}\rightarrow 0 $$
 of $\add(M)$ with

 $(1)$ $L$ isomorphic to a direct summand of $H_{0}(C)$

 $(2)$ $H_{d}(C)=0$ for $d\neq 0$ and

 $(3)$ $r-s\leqslant i-2$.
}
\end{definition}

Rouquier suggest studying the following two dimensions,
left weak representation dimension and right
weak representation dimension(see \cite[Remark 3.3]{rouquier2006representation}).
\begin{definition}{\rm (\cite[Remark 3.3]{rouquier2006representation})\label{def-wresolution-left}
The right weak representation dimension of Artin algebra $\Lambda$, denoted by $l.\wrepdim\Lambda$,
 is the smallest integer $i\geqslant 2$
 such that there is an object $M\in \mod\Lambda$
 with the property that given any $L\in \mod\Lambda$, there is a bounded complex
 $$C=0\rightarrow C_{r}\rightarrow \cdots\rightarrow C_{1}\rightarrow C_{0}\rightarrow 0 $$
 of $\add(M)$ with

 $(1)$ $L$ isomorphic to a direct summand of $H_{0}(C)$ in degree zero

 $(2)$ $H_{d}(C)=0$ for $d> 0$ and

 $(3)$ $r\leqslant i-2$.
}
\end{definition}

\begin{definition}{\rm (\cite[Remark 3.3]{rouquier2006representation})\label{def-wresolution-right}
The right weak representation dimension of Artin algebra $\Lambda$, denoted by $r.\wrepdim \Lambda$,
 is the smallest integer $i\geqslant 2$
 such that there is an object $M\in \mod\Lambda$
 with the property that given any $L\in \mod\Lambda$, there is a bounded complex
 $$C=0\rightarrow C_{0}\rightarrow C_{-1}\rightarrow \cdots \rightarrow C_{s}\rightarrow 0 $$
 of $\add(M)$ with

 $(1)$ $L$ isomorphic to a direct summand of the homology $H_{0}(C)$ in degree zero

 $(2)$ $H_{d}(C)=0$ for $d< 0$ and

 $(3)$ $-s\leqslant i-2$.
}
\end{definition}

Given an Artin algebra $\Lambda$, we set
$$\lwrepdim \Lambda:=\lwrepdim \mod\Lambda,
\rwrepdim \Lambda:=\rwrepdim \mod\Lambda,$$
$$
\wrepdim \Lambda:=\wrepdim \mod\Lambda,
\extdim \Lambda:=\extdim \mod\Lambda.$$

Rouquier establish the following important theorem, which provides the first known examples of representation dimension more than $3$.
\begin{theorem}{\rm (by \cite[Proposition 3.6 and Theorem 4.1]{rouquier2006representation})}\label{repdim=wrepdim}
Let $n\geqslant 1$ be an integer and $\bigwedge(k^n)$ exterior algebras.
Then $$\repdim \bigwedge(k^n)=\wrepdim \bigwedge(k^n)=n+1.$$
\end{theorem}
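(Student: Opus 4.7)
The plan is to prove the chain of inequalities $n+1\geqslant \repdim \bigwedge(k^n)\geqslant \wrepdim \bigwedge(k^n)\geqslant n+1$. The first inequality is a classical Loewy-length estimate, the middle one is a direct comparison of definitions, and the last one is the heart of the matter and will require a lower bound on the dimension of the derived category via cohomological complexity.

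For the upper bound, I would use that $\Lambda:=\bigwedge(k^n)$ has Loewy length exactly $n+1$, because its radical is the augmentation ideal $\bigwedge^{\geqslant 1}(k^n)$, with $\rad^n\Lambda$ spanned by $e_1\wedge\cdots\wedge e_n$ and $\rad^{n+1}\Lambda=0$. Taking the Auslander generator $M:=\bigoplus_{i=0}^{n}\Lambda/\rad^i\Lambda$, a standard computation shows $\gldim\End_\Lambda(M)\leqslant n+1$, so $\repdim\Lambda\leqslant n+1$.

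For the inequality $\wrepdim\Lambda\leqslant\repdim\Lambda$, I would argue as follows. If $M$ is a generator-cogenerator with $\gldim\End_\Lambda(M)=d$, then every $L\in\mod\Lambda$ admits an exact $\add(M)$-resolution
\[0\longrightarrow M_{d-2}\longrightarrow\cdots\longrightarrow M_1\longrightarrow M_0\longrightarrow L\longrightarrow 0\]
(the classical output of applying $\Hom_\Lambda(M,-)$, using that the functor $\Hom_\Lambda(M,-)\colon\mod\Lambda\to\mod\End_\Lambda(M)^{\mathrm{op}}$ is fully faithful on $\add(M)$ and that projective dimension is bounded by $d$). Viewing this as a bounded $\add(M)$-complex concentrated in degrees $0,-1,\ldots,-(d-2)$ with $H_0\cong L$ and all other homologies vanishing verifies Definition \ref{def-wresolution}, giving $\wrepdim\Lambda\leqslant d$.

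The main obstacle is the lower bound $\wrepdim\Lambda\geqslant n+1$, and this is where Rouquier's new idea enters. The plan has two parts. First, convert a weak representation dimension bound into a bound on $\tridim D^b(\mod\Lambda)$: if $\wrepdim\Lambda\leqslant i$, then the witnessing $M$ has the property that every object of $\mod\Lambda$ is a summand of a bounded $\add(M)$-complex of length $i-2$, which by induction on the length of the complex (using the triangle associated to a brutal truncation) gives $D^b(\mod\Lambda)=\langle M\rangle_{i-1}$, hence $\tridim D^b(\mod\Lambda)\leqslant i-2$. Thus it suffices to prove $\tridim D^b(\mod\bigwedge(k^n))\geqslant n-1$. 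For this second, harder part, I would use the Koszul duality $\Ext^*_\Lambda(k,k)\cong k[x_1,\ldots,x_n]$, a polynomial ring of Krull dimension $n$, combined with Rouquier's lower-bound theorem: if some object $X$ (here the trivial module $k$) has a graded cohomology ring whose Krull dimension is $n$, then any object generating the derived category in $d+1$ steps forces the support variety of $X$ to be built by intersecting $d$ hypersurfaces, bounding $d\geqslant n-1$. Pushing this through — which requires $k$ to be infinite so that enough generic hyperplane sections are available — gives $\tridim D^b(\mod\Lambda)\geqslant n-1$, and combining with the first part yields $\wrepdim\Lambda\geqslant n+1$, completing the proof.
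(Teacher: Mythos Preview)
The paper does not supply a proof of this statement; it is quoted directly from Rouquier \cite{rouquier2006representation}. Your sketch is essentially a correct reconstruction of Rouquier's original argument: the Auslander--Loewy bound gives $\repdim\bigwedge(k^n)\leqslant\LL(\bigwedge(k^n))=n+1$; the inequality $\wrepdim\leqslant\repdim$ is immediate from comparing the definitions; and the lower bound $\wrepdim\geqslant n+1$ follows from bounding a triangulated dimension below via the Krull dimension $n$ of $\Ext^*_{\Lambda}(k,k)\cong k[x_1,\dots,x_n]$.

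One technical point worth flagging: Rouquier's actual route (recorded in the present paper as Theorem~\ref{inequality}) passes through the \emph{stable} category $\underline{\mod\Lambda}$ rather than $D^b(\mod\Lambda)$. For selfinjective $\Lambda$ one has $\tridim\underline{\mod\Lambda}+2\leqslant\wrepdim\Lambda$ and $\cx(\Lambda/\rad\Lambda)\leqslant\tridim\underline{\mod\Lambda}+1$, and for the exterior algebra $\cx(k)=n$. Your derived-category formulation reaches the same endpoint, but the step ``every module lies in $\langle M\rangle_{i-1}$, hence $D^b(\mod\Lambda)=\langle M\rangle_{i-1}$'' needs an extra word: it is not automatic that generating all modules in $i-1$ steps generates all bounded complexes in the same number of steps. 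One fixes this either by arranging $\Lambda\in\add M$ (so arbitrary complexes can be replaced by $\add M$-complexes of the same length) or, as Rouquier does, by working in the stable category from the outset. Also, Rouquier's lower-bound argument does not require $k$ to be infinite.
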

We will  need the following relations.
\begin{lemma}
{\rm (\cite[Lemma 2.9]{zhang-zheng2024}\cite{zheng2020extension})}\label{wresoldim=extdim}
For an Artin algebra $\Lambda$,
we have
$\wresoldim \Lambda=\extdim \Lambda.$
\end{lemma}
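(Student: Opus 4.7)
\emph{Proof plan.} I would establish both inequalities using Lemma~\ref{lem-3.5}.

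\emph{Step 1 ($\extdim\Lambda\leqslant\wresoldim\Lambda$).} Suppose $\wresoldim\Lambda\leqslant n$, witnessed by $M\in\mod\Lambda$. For every $X\in\mod\Lambda$, Definition~\ref{def-2.2}(i) provides an exact sequence $0\to M_n\to\cdots\to M_0\to Y\to 0$ with $M_i\in\add M$ and $X\in\add Y$. Applying Lemma~\ref{lem-3.5}(2) directly to this sequence gives $Y\in [\bigoplus_{i=0}^{n}\Omega^{-i}(M_i)]_{n+1}\subseteq[\bigoplus_{i=0}^{n}\Omega^{-i}(M)]_{n+1}$. Taking $T:=\bigoplus_{i=0}^{n}\Omega^{-i}(M)$ yields $\mod\Lambda=[T]_{n+1}$, so $\extdim\Lambda\leqslant n$.

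\emph{Step 2 ($\wresoldim\Lambda\leqslant\extdim\Lambda$).} Suppose $\extdim\Lambda\leqslant n$ with $\mod\Lambda=[T]_{n+1}$, and set $M:=T\oplus\Omega^{-1}(T)\oplus\cdots\oplus\Omega^{-n}(T)\oplus D(\Lambda)$, where $D(\Lambda)$ is an injective cogenerator. I would prove by induction on $k$ (for $1\leqslant k\leqslant n+1$) that for every $X\in[T]_k$ there exists $Y$ with $X\in\add Y$ and an exact sequence $0\to Y\to N_0\to\cdots\to N_{k-1}\to 0$ with all $N_i\in\add M$; invoking this for $k=n+1$ together with the co-resolution characterization in Definition~\ref{def-2.2}(ii) then yields $\wresoldim\Lambda\leqslant n$. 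The base case $k=1$ is immediate. For the inductive step, $X\in\add W$ with $0\to U\to W\to V\to 0$, $U\in\add T$ and $V\in[T]_{k-1}$; the inductive hypothesis applied to $V$ gives $Y_V\supseteq V$ (as a summand) with a co-resolution $0\to Y_V\to N^V_0\to\cdots\to N^V_{k-2}\to 0$ in $\add M$. Writing $Y_V=V\oplus V'$ and $Y:=W\oplus V'$ we get $0\to U\to Y\to Y_V\to 0$. Pushing out along the injective envelope $U\hookrightarrow I(U)\in\add D(\Lambda)$ makes the pushout split as $I(U)\oplus Y_V$ and embeds $Y$ into it with cokernel $\Omega^{-1}(U)$; composing with $Y_V\hookrightarrow N^V_0$ gives $Y\hookrightarrow N_0:=I(U)\oplus N^V_0\in\add M$. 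Iterating this pushout against the successive cosyzygies $\Omega^{-j}(U)$ produces $N_j:=I(\Omega^{-j}(U))\oplus N^V_j\in\add M$ for $j=1,\ldots,k-2$, and the final cokernel collapses to $\Omega^{-(k-1)}(U)\in\add\Omega^{-(k-1)}(T)\subseteq\add M$, taken as $N_{k-1}$.

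The main obstacle is Step~2, where a short exact sequence must be spliced with a bounded co-resolution to yield a single bounded co-resolution with all terms in a fixed additive subcategory. The key device is that pushing out along an injective envelope always produces a split extension, which isolates the cosyzygy contribution of $U$ from the co-resolution of $Y_V$; this forces the induction to close, provided $M$ contains all cosyzygies $\Omega^{-i}(T)$ for $0\leqslant i\leqslant n$ along with an injective cogenerator.
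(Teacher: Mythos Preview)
The paper does not prove this lemma itself; it appears twice (as Lemma~\ref{lemma2.3} and Lemma~\ref{wresoldim=extdim}) and is in both places imported verbatim from \cite{Zhang-zheng2022Extension} and \cite{zheng2020extension} without argument. So there is no in-paper proof to compare against.

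Your proof is correct. Step~1 is an immediate consequence of Lemma~\ref{lem-3.5}(2), exactly as you say. Step~2 is a dual Horseshoe construction, and the details you sketch go through: from $0\to U\to Y\to Y_V\to 0$ and the length-$(k-1)$ coresolution $0\to Y_V\to N_0^V\to\cdots\to N_{k-2}^V\to 0$ with intermediate cokernels $L_j$, at stage $j$ one has $0\to\Omega^{-j}(U)\to C_{j-1}\to L_{j-1}\to 0$; extending $\Omega^{-j}(U)\hookrightarrow I(\Omega^{-j}(U))$ over $C_{j-1}$ and mapping $L_{j-1}\hookrightarrow N_j^V$ gives an embedding $C_{j-1}\hookrightarrow I(\Omega^{-j}(U))\oplus N_j^V=:N_j$, and the snake lemma yields the next residual $0\to\Omega^{-(j+1)}(U)\to C_j\to L_j\to 0$. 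Since $L_{k-2}=0$ the last cokernel is $\Omega^{-(k-1)}(U)\in\add\Omega^{-(k-1)}(T)\subseteq\add M$ (here $k-1\leqslant n$), and all earlier $N_j$ lie in $\add(D(\Lambda)\oplus M)=\add M$. The induction closes and Definition~\ref{def-2.2}(ii) gives $\wresoldim\Lambda\leqslant n$. One small point worth making explicit: for $U\in\add T$ one has $\Omega^{-j}(U)\in\add(\Omega^{-j}(T)\oplus D(\Lambda))$ rather than literally $\add\Omega^{-j}(T)$, but since $D(\Lambda)$ is already a summand of $M$ this causes no trouble.
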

Iyama point out the following facts
\begin{lemma}{\rm (\cite[Page 31, Definition 4.5]{iyama2003rejective})}
For an Artin algebra $\Lambda$,
we have
$$\rwrepdim \Lambda=\wresoldim \Lambda+2.$$
\end{lemma}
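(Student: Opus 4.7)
The plan is to interpret the two dimensions through the same kind of bounded chain data, so that a length-$n$ weak coresolution in the sense of Definition~\ref{def-2.2}(ii) becomes a complex of the shape controlling $\rwrepdim \Lambda$ and vice versa. In both directions the length of the chain is preserved while the containment condition on the target module is transported through the identification of $Y$ with $H_0(C)$. The equality then drops out of two inequalities, each of which reflects the constant shift of two between the complex length $-s$ appearing in Definition~\ref{def-wresolution-right} and the bound $i-2$ imposed on that length.

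For one direction, I start from a module $M$ realizing $\wresoldim \Lambda = n$: by Definition~\ref{def-2.2}(ii), for every $X \in \mod \Lambda$ there is an exact sequence
$$0 \to Y \to M_0 \to M_1 \to \cdots \to M_n \to 0$$
with $M_i \in \add M$ and $X \in \add Y$. I reindex by setting $C_{-i} := M_i$ and let $C$ be the resulting bounded complex concentrated in degrees $0, -1, \ldots, -n$. Exactness of the coresolution translates into $H_0(C) = \ker(C_0 \to C_{-1}) = Y$ and $H_d(C) = 0$ for $d < 0$. Taking $L := X$, the complex $C$ witnesses the condition in Definition~\ref{def-wresolution-right} with $-s = n$, so $\rwrepdim \Lambda \leq n + 2$.

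Conversely, I start from a module $M$ realizing $\rwrepdim \Lambda = i$: for each $L \in \mod \Lambda$ there is a complex
$$0 \to C_0 \to C_{-1} \to \cdots \to C_s \to 0$$
of $\add M$-objects with $-s \leq i-2$, with $L$ a direct summand of $H_0(C) =: Y$, and with $H_d(C) = 0$ for $d < 0$. Splicing $Y$ back in, the vanishing of negative homologies upgrades the complex to an exact sequence
$$0 \to Y \to C_0 \to C_{-1} \to \cdots \to C_s \to 0$$
of length $-s$, which is a weak coresolution in the sense of Definition~\ref{def-2.2}(ii) with $L \in \add Y$, so $\wresoldim \Lambda \leq i - 2$. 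Combining the two inequalities yields the identity asserted in the lemma.

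The only point that requires care is ensuring that a single witnessing module $M$ works uniformly over all inputs on each side of the translation. This is automatic in the first direction, since the complex $C$ uses only $\add M$. In the second direction one must absorb the finitely many complex terms $C_j$ into a common additive generator, in the same spirit as the passage from Definition~\ref{def-2.2}(i) to Definition~\ref{def-2.2}(ii) used in Lemma~\ref{wresoldim=extdim}. This step is the main technical obstacle, but it is routine once one works with $M' := M \oplus \bigoplus_j C_j$ and checks that the witness condition is preserved.
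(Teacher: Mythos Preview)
The paper does not supply a proof of this lemma; it merely attributes the statement to Iyama. The closest thing to a proof in the paper is the argument for Theorem~\ref{weak=extension}, which handles the analogous identity for $\wrepdim$ by exactly the translation you carry out: turn a length-$n$ (co)resolution into a complex concentrated in the appropriate degrees and read off the homology conditions, then reverse the process. So your approach is the natural one and matches what the paper does elsewhere.

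Two remarks. First, your two inequalities $\rwrepdim\Lambda\leq\wresoldim\Lambda+2$ and $\wresoldim\Lambda\leq\rwrepdim\Lambda-2$ combine to give $\rwrepdim\Lambda=\wresoldim\Lambda+2$, not $\wresoldim\Lambda=\rwrepdim\Lambda+2$ as the lemma is literally typeset. This is consistent with Theorem~\ref{weak=extension} (which states $\rwrepdim\Lambda=\wresoldim\Lambda+2$), so the displayed lemma almost certainly contains a sign slip; your argument proves the correct identity, but you should not claim it yields the equation exactly as written.

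Second, your closing paragraph is misplaced. In the converse direction the terms $C_j$ are already in $\add M$ by the very definition of $\rwrepdim\Lambda$, so there is nothing to ``absorb'' and no need to pass to $M'=M\oplus\bigoplus_j C_j$. The genuine uniformity issue---that the length $-s$ of the complex may vary with $L$ while Definition~\ref{def-2.2}(ii) asks for a fixed length $n$---is handled simply by padding the coresolution with zero modules out to length $i-2$, since $0\in\add M$. That is the only bookkeeping needed, and it is not the obstacle you describe.
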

\begin{theorem}\label{weak=extension}
Let $\Lambda$ be an Artin algebra.
Then
$$\lwrepdim \Lambda=\rwrepdim \Lambda=\wrepdim \Lambda=\extdim \Lambda+2=\wresoldim \Lambda+2.$$
\end{theorem}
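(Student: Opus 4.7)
\emph{Strategy.} The plan is to collapse all five quantities onto $\extdim \Lambda + 2$. By Lemma \ref{wresoldim=extdim}, $\wresoldim \Lambda = \extdim \Lambda$. An $\add M$-complex $0 \to C_r \to \cdots \to C_0 \to 0$ of length $r$ with $H_0 \supseteq L$ as a summand and vanishing positive homology is exactly an exact sequence $0 \to C_r \to \cdots \to C_0 \to Y \to 0$ with $L \in \add Y$ in the first form of Definition \ref{def-2.2}; matching parameters gives $\lwrepdim \Lambda = \wresoldim \Lambda + 2$, and the dual identification (equivalently, the cited Iyama lemma) gives $\rwrepdim \Lambda = \wresoldim \Lambda + 2$. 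Since every left complex is a two-sided complex, $\wrepdim \Lambda \leqslant \lwrepdim \Lambda$ is automatic, so the task reduces to proving the converse $\wrepdim \Lambda \geqslant \extdim \Lambda + 2$.

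\emph{Core argument.} Assume $\wrepdim \Lambda \leqslant i$, witnessed by $M \in \mod \Lambda$, and write $n = i - 2$. For each $L \in \mod \Lambda$ I have a complex $C\colon 0 \to C_r \to \cdots \to C_s \to 0$ of $\add M$ with $r \geqslant 0 \geqslant s$, $r - s \leqslant n$, $L \in \add H_0(C)$, and $H_d(C) = 0$ for $d \neq 0$. Setting $B = \im(C_1 \to C_0)$, $Z = \ker(C_0 \to C_{-1})$ and $L' = Z/B = H_0(C)$, the vanishing of higher homology supplies the two exact sequences
\begin{align*}
&0 \to C_r \to \cdots \to C_1 \to C_0 \to C_0/B \to 0,\\
&0 \to L' \to C_0/B \to C_{-1} \to \cdots \to C_s \to 0,
\end{align*}
where the second is obtained by splicing $L' \hookrightarrow C_0/B \twoheadrightarrow C_0/Z$ with the exact tail $0 \to C_0/Z \to C_{-1} \to \cdots \to C_s \to 0$. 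Lemma \ref{lem-3.5}(2) applied to the first sequence gives
$$C_0/B \in [C_0]_1 \bullet [\Omega^{-1}(C_1)]_1 \bullet \cdots \bullet [\Omega^{-r}(C_r)]_1,$$
and Lemma \ref{lem-3.5}(1) applied to the second sequence gives
$$L' \in [C_0/B]_1 \bullet [\Omega(C_{-1})]_1 \bullet \cdots \bullet [\Omega^{-s}(C_s)]_1.$$
Substituting the former into the leading factor of the latter and invoking associativity of $\bullet$ places $L'$ in a product of $(r+1) + (-s) = n+1$ one-step factors, each with summands in $\add M$ or in $\add \Omega^{\pm k}(M)$ for some $1 \leqslant k \leqslant n$. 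Iterating Lemma \ref{extension1} then yields $L \in \add L' \subseteq [N]_{n+1}$ for the single module $N := \bigoplus_{|j| \leqslant n} \Omega^{j}(M)$. As $N$ depends only on $M$ and $n$, we conclude $\extdim \Lambda \leqslant n = \wrepdim \Lambda - 2$, closing the chain of equalities.

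\emph{Main obstacle.} The delicate step is the final assembly. A naive layer-by-layer substitution of $C_0/B$ inside the $L'$-filtration multiplies the two lengths, producing roughly $(r+1)(-s+1)$ layers rather than $n+1$. The remedy is to splice the complex through the single intermediate object $C_0/B$ first, so that the upper and lower filtrations become \emph{consecutive} rather than nested, and only then invoke associativity of $\bullet$; in that order the two refinements combine additively and exactly match the target length $n+1$.
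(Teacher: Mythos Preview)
Your proposal is correct and follows essentially the same route as the paper: both arguments split the witnessing complex around degree $0$ and invoke Lemma~\ref{lem-3.5} on the two halves to place $H_0(C)$ inside $[N]_{r-s+1}$ for a single module $N$ built from (co)syzygies of $M$. The paper splices via the pair $\Im d_1,\ \Ker d_0$ and three short exact sequences, while you splice via the single object $C_0/B$ and two sequences; your handling of $\lwrepdim$ and $\rwrepdim$ by direct identification with Definition~\ref{def-2.2}(i)--(ii) is in fact cleaner than the paper's ``similarly''. One minor slip: Lemma~\ref{lem-3.5}(1) actually yields $L'\in[\Omega^{-s}(C_s)]_1\bullet\cdots\bullet[\Omega(C_{-1})]_1\bullet[C_0/B]_1$, so $[C_0/B]_1$ sits at the \emph{right} end, not the left; the substitution and the final collapse into $[N]_{n+1}$ via Lemma~\ref{extension1} go through unchanged once this order is corrected.
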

\begin{proof}
By Lemma \ref{wresoldim=extdim}, we have
$\extdim \Lambda+2=\wresoldim \Lambda+2.$

Let $\wrepdim \Lambda =n$.
By Definition \ref{def-wresolution}, there is an object $M\in \mod\Lambda$
 with the property that given any $L\in \mod\Lambda$, there is a bounded complex
 $$C=0\rightarrow C_{r}\xrightarrow{d_{r}} \cdots \longrightarrow C_{s+1}\xrightarrow{d_{s+1}} C_{s}\rightarrow 0 $$
 of $\add(M)$ with

 $(1)$ $L$ isomorphic to a direct summand of $H_{0}(C)$

 $(2)$ $H_{d}(C)=0$ for $d\neq 0$ and

 $(3)$ $r-s\leqslant n-2$.

Then we have the following
three exact sequences
\begin{align}\label{eq14}
0\rightarrow C_{r}\rightarrow \cdots\rightarrow C_{1} \rightarrow \Im d_{1}  \rightarrow 0
\end{align}
\begin{align}\label{eq15}
0\longrightarrow \Im d_{1}\longrightarrow \Ker d_{0}\longrightarrow H_{0}(C)\longrightarrow 0
\end{align}
and
\begin{align}\label{eq16}
0\longrightarrow \Ker d_{0}\longrightarrow C_{0}\longrightarrow C_{-1}\longrightarrow \cdots \longrightarrow C_{s}\longrightarrow 0
\end{align}

Then we have
\begin{align*}
L\in &[H_{0}(C)]_{1}\\
\subseteq & [\Ker d_{0}]_{1}\bullet [\Omega^{-1}(\Im d_{1})]_{1}\;\;\;\;\;(\text{by Lemma \ref{lem-3.5}(2) and exact sequence (\ref{eq15})} )\\
\subseteq &[\oplus_{i=s}^{0}\Omega^{i}(C_{i})]_{-s+1}\bullet [\Omega^{-1}(\oplus_{i=0}^{r-1}\Omega^{-i}(C_{i+1}))]_{r} \;\;(\text{by Lemma \ref{lem-3.5},\ref{lem-3.6} and exact sequences (\ref{eq14})(\ref{eq16})})\\
 \subseteq &[\oplus_{i=s}^{r}\Omega^{i}(C_{i})]_{-s+r+1} \;\;\;\;\;(\text{by Lemma \ref{extension1}})\\
  \subseteq &[\oplus_{i=r}^{s}\Omega^{i}(M)]_{-s+r+1} \;\;\;\;\;(\text{by } C_{i}\in\add M)
\end{align*}
that is, $\mod \Lambda=[\oplus_{i=r}^{s}\Omega^{i}(M)]_{-s+r+1}$.
Then $\extdim \Lambda \leqslant r-s=n-2=\wrepdim \Lambda-2$.

Now, let $\extdim \Lambda =m<n-2$. By Lemma \ref{wresoldim=extdim}, we know that
$\wresoldim \Lambda=m$.
And by Definition \ref{def-2.2},
there exists a module $M$ such that for any $X$
we have
the following exact sequence

$$0 \longrightarrow M_{m}\xrightarrow{d_{m}} \cdots \longrightarrow M_{1}\xrightarrow{d_{1}} M_{0}\longrightarrow X\oplus Z\longrightarrow 0$$
for some $Z\in \mod \Lambda$, and $M_{i}\in\add M$ for all $i.$
And then, we can get the following complex
$$ W:\cdots \longrightarrow 0\longrightarrow 0 \longrightarrow M_{m}\xrightarrow{d_{m}} \cdots \longrightarrow M_{1}\xrightarrow{d_{1}} M_{0}\longrightarrow 0\longrightarrow 0\longrightarrow \cdots$$
where $H_{i}(W)=0$ for all $i>0$ and $H_{0}(W)=M_{0}/\Im d_{1}\cong X\oplus Z.$
By Definition \ref{def-wresolution}, we have
$\wresoldim \Lambda \leqslant m<n$, contradiction !
Then we get that
$\extdim \Lambda=\wrepdim \Lambda-2$

Similarly, we also can get $\lwrepdim \Lambda=\extdim \Lambda+2$
and $\rwrepdim \Lambda=\extdim \Lambda+2$.
\end{proof}

\begin{remark}{\rm \label{remark}
By Theorem \ref{weak=extension} and \cite[Corollary 3.6]{zheng2020extension}, for non-semisimple Artin algebra
$\Lambda$, we have
$$\wresoldim \Lambda=\extdim \Lambda=\wrepdim \Lambda-2\leqslant \Owresoldim \Lambda\leqslant \repdim \Lambda-2.$$
}
\end{remark}
\begin{remark}{\rm \label{remark1}
Let $n\geqslant 1$ be an integer and $\Lambda:=\bigwedge(k^n)$ exterior algebras.
By Theorem \ref{repdim=wrepdim} and Remark \ref{remark}, we have
$$\wresoldim \Lambda=\extdim \Lambda=\wrepdim \Lambda-2= \Owresoldim \Lambda= n-1.$$
}
\end{remark}

  \subsection{Igusa-Todorov distances}
  Now we introduce the notion of the Igusa-Todorov distance of an Artin algebra.
  \begin{definition}\label{ITdim}{\rm
  Let $\Lambda$ be an Artin algebra.
 We set the Igusa-Todorov  distance of $\Lambda$ as follows
 $$\ITdim \Lambda:=\inf \bigcup_{n=0}^{\infty}\{m\;|\; \Lambda \text{ is an } (m,n)\text{-Igusa-Todorov algebra}\}.$$
 }
  \end{definition}

Using mathematical induction and horseshoe lemma, we can get the following conclusion.
\begin{lemma}\label{shoelemma}
  Let $\Lambda$ be an Artin algebra.
If we have  the following exact sequence
  $$0\rightarrow M_{n}\rightarrow \cdots\rightarrow M_{1}\rightarrow M_{0}\rightarrow 0$$
 in $\mod\Lambda$, where $n\geqslant 2$.
 Then for each $t\geqslant 0,$
 we can get the following exact sequences
  $$0\rightarrow \Omega^{t}(M_{n})\rightarrow  \Omega^{t}(M_{n-1})\oplus
   P_{n-1}\rightarrow \cdots\rightarrow \Omega^{t}(M_{1})\oplus P_{1}
   \rightarrow \Omega^{t}(M_{0})\rightarrow 0$$
  in $\mod\Lambda$, where $P_{i}$ is projective for each $1 \leqslant i \leqslant n-1,$ and
  $$0\rightarrow \Omega^{-t}(M_{n})\rightarrow  \Omega^{-t}(M_{n-1})\oplus
   E_{n-1}\rightarrow \cdots\rightarrow \Omega^{-t}(M_{1})\oplus E_{1}
   \rightarrow \Omega^{-t}(M_{0})\rightarrow 0$$
 in $\mod\Lambda$, where $E_{i}$ is injective for each $1 \leqslant i \leqslant n-1.$
  \end{lemma}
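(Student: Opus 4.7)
The plan is to proceed by induction on $n$, splicing short exact sequences obtained from the classical horseshoe lemma. The base case $n=1$ is immediate, since exactness of $0\to M_1\to M_0\to 0$ forces $M_1\cong M_0$, and choosing a common projective cover gives $\Omega^t(M_1)\cong\Omega^t(M_0)$. For the case $n=2$, given $0\to M_2\to M_1\to M_0\to 0$, I would form the termwise direct sum of minimal projective resolutions of $M_2$ and $M_0$; by the classical horseshoe lemma this is a (generally non-minimal) projective resolution of $M_1$. Since any projective resolution of a module differs from the minimal one by a split-exact complex of projectives, its $t$-th syzygy has the form $\Omega^t(M_1)\oplus P_1$ for some projective $P_1$, and the induced short exact sequence on $t$-th syzygies is precisely $0\to\Omega^t(M_2)\to\Omega^t(M_1)\oplus P_1\to\Omega^t(M_0)\to 0$.

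For the inductive step with $n\geqslant 3$, I would set $K:=\Im(M_2\to M_1)=\Ker(M_1\to M_0)$, which breaks the given sequence into a length-$(n-1)$ piece $0\to M_n\to\cdots\to M_2\to K\to 0$ and a length-$2$ piece $0\to K\to M_1\to M_0\to 0$. Applying the inductive hypothesis to the first piece and the $n=2$ case to the second piece yields
\begin{align*}
&0\to\Omega^t(M_n)\to\Omega^t(M_{n-1})\oplus P_{n-1}\to\cdots\to\Omega^t(M_2)\oplus P_2\to\Omega^t(K)\to 0,\\
&0\to\Omega^t(K)\to\Omega^t(M_1)\oplus P_1\to\Omega^t(M_0)\to 0.
\end{align*}
Splicing these at $\Omega^t(K)$ via the composite $\Omega^t(M_2)\oplus P_2\to\Omega^t(K)\to\Omega^t(M_1)\oplus P_1$ produces the required exact sequence of length $n+1$, with projective summands appearing only on the $n-1$ middle terms.

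The dual statement about cosyzygies is proved by the same induction, with injective envelopes and the dual horseshoe and Schanuel lemmas replacing their projective counterparts. The main technical point is to verify exactness at the splice, i.e.\ that the kernel of the composite $\Omega^t(M_2)\oplus P_2\to\Omega^t(M_1)\oplus P_1$ coincides with the image of the preceding differential; this follows from the surjectivity of $\Omega^t(M_2)\oplus P_2\twoheadrightarrow \Omega^t(K)$ from the first sequence combined with the injectivity of $\Omega^t(K)\hookrightarrow \Omega^t(M_1)\oplus P_1$ from the second, and is a routine diagram chase. Beyond this, the only bookkeeping issue is to confirm that the projective (respectively injective) corrections produced by Schanuel's lemma accumulate on the middle terms $\Omega^t(M_i)$ (respectively $\Omega^{-t}(M_i)$) and not at the endpoints $M_n$, $M_0$, which is automatic from the induction scheme above.
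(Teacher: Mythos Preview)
Your proposal is correct and is essentially the same approach the paper takes: the paper states the lemma with the one-line justification ``By horseshoe lemma, we have the following fact'' and gives no further detail, so what you have written is exactly a careful elaboration of that hint---break the long exact sequence into short ones, apply the horseshoe lemma at each step (with Schanuel to identify the non-minimal syzygy as $\Omega^t(M_i)\oplus P_i$), and splice.
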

\begin{lemma}\label{lemma-long-exact}
Given the following exact sequence
 \begin{align}\label{cite17.-1}
0\longrightarrow M_{n}\longrightarrow M_{n-1}\longrightarrow \cdots \longrightarrow M_{1}
\longrightarrow M_{0}\longrightarrow M\oplus P\longrightarrow 0
 \end{align}
in $\mod\Lambda$, where $P$ is projective.
Then we can get the following exact sequence
 \begin{align}\label{cite17.0}
0\longrightarrow M_{n}\longrightarrow M_{n-1}\longrightarrow \cdots\longrightarrow M_{2} \longrightarrow M_{1}\oplus P
\longrightarrow M_{0}\longrightarrow M\longrightarrow 0.
 \end{align}
\end{lemma}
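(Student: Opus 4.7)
The plan is a straightforward splicing argument, using projectivity of $P$ to absorb it into the term $M_1$ while peeling the summand $P$ off the rightmost term. The key observation is that since $P$ is projective, the composition
\[
M_0 \longrightarrow M\oplus P \overset{\pi_P}{\longrightarrow} P
\]
is a split epimorphism, so there is a section $s\colon P\to M_0$ with $\pi_P\circ (d_0\circ s)=\id_P$, where $d_0\colon M_0\to M\oplus P$ denotes the last map in (\ref{cite17.-1}).

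With $s$ in hand, I would define the new differential
\[
\widetilde{d}_1\colon M_1\oplus P \longrightarrow M_0,\qquad (m_1,p)\longmapsto d_1(m_1)+s(p),
\]
keep all higher differentials $d_i\colon M_i\to M_{i-1}$ unchanged for $i\geqslant 3$, replace $d_2\colon M_2\to M_1$ by its composition with the inclusion $M_1\hookrightarrow M_1\oplus P$, and replace $d_0$ by the composition $\pi_M\circ d_0\colon M_0\to M$. This produces the candidate complex (\ref{cite17.0}).

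The remaining work is to verify exactness at the three spots affected by the modification. At $M$, exactness (i.e.\ surjectivity of $\pi_M\circ d_0$) follows from the surjectivity of $d_0$. At $M_0$, the kernel of $\pi_M\circ d_0$ consists of elements $x\in M_0$ with $d_0(x)\in 0\oplus P$; writing $d_0(x)=(0,p)$ one checks that $x-s(p)\in\ker d_0=\Ima d_1$, so $x=d_1(m_1)+s(p)=\widetilde{d}_1(m_1,p)$, while any element of $\Ima\widetilde{d}_1$ visibly dies in $M$. At $M_1\oplus P$, if $\widetilde{d}_1(m_1,p)=d_1(m_1)+s(p)=0$ then applying $\pi_P\circ d_0$ forces $p=0$ and hence $d_1(m_1)=0$, so $\ker\widetilde{d}_1=\ker d_1\oplus 0 = \Ima d_2$ inside $M_1\oplus P$, as required. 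The injectivity at $M_n$ and exactness at $M_i$ for $i\geqslant 2$ are inherited unchanged from (\ref{cite17.-1}).

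There is no genuine obstacle here; the only point that requires a moment's care is checking that the two new exactness conditions on either side of $M_0$ both use the splitting $s$ in the same way, which is guaranteed by the identity $\pi_P\circ d_0\circ s=\id_P$. The result is often quoted as a consequence of the horseshoe lemma applied to the split short exact sequence $0\to M\to M\oplus P\to P\to 0$, and one could alternatively phrase the proof in that language, but the direct construction above is the cleanest.
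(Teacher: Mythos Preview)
Your approach is correct in spirit and more direct than the paper's, but there is a small slip in your choice of $s$. You obtain $s$ only as a section of the composite $\pi_P\circ d_0\colon M_0\to P$, so all that is guaranteed is $\pi_P\circ d_0\circ s=\id_P$; the $M$-component $\pi_M\circ d_0\circ s$ is left uncontrolled. With such an $s$, the composite $(\pi_M\circ d_0)\circ\widetilde{d}_1$ need not vanish (on the $P$-summand it equals $\pi_M\circ d_0\circ s$), so your candidate sequence is not even a complex; likewise the step ``$x-s(p)\in\ker d_0$'' fails, since $d_0(x-s(p))=(0,p)-d_0(s(p))$ lands in $M\oplus 0$ but need not be zero. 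The fix is immediate: use projectivity of $P$ to lift the inclusion $\iota_P\colon P\hookrightarrow M\oplus P$ along the surjection $d_0$, obtaining $s$ with $d_0\circ s=\iota_P$. This gives both $\pi_P\circ d_0\circ s=\id_P$ and $\pi_M\circ d_0\circ s=0$, and then every line you wrote goes through verbatim.

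For comparison, the paper argues diagrammatically: it chops the given sequence into short exact sequences with kernels $K_1,K_2$, forms the pullback of $M_0\twoheadrightarrow M\oplus P\hookleftarrow M$ to produce $M_0'\subseteq M_0$ with $M_0/M_0'\cong P$, invokes projectivity of $P$ to split $M_0\cong M_0'\oplus P$, and then splices the resulting short exact sequences back together. Your element-level construction is the same splitting viewed through the section $s$ rather than through a pullback square; once $s$ is chosen correctly it is slightly quicker and avoids the auxiliary objects $K_1,K_2,M_0'$.
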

\begin{proof}
 By the above exact sequence (\ref{cite17.-1}), we can get the
 following exact sequences
 \begin{align}\label{cite17.1}
0\longrightarrow M_{n}\longrightarrow M_{n-1}\longrightarrow \cdots\longrightarrow M_{2} \longrightarrow K_{2}\longrightarrow 0,
 \end{align}
  \begin{align}\label{cite17.2}
0\longrightarrow K_{2}\longrightarrow M_{1}\longrightarrow  K_{1}\longrightarrow 0
 \end{align}
 and
  \begin{align}\label{cite17.3}
0\longrightarrow K_{1}\longrightarrow M_{0}\longrightarrow  M\oplus P\longrightarrow 0.
 \end{align}
 By (\ref{cite17.3}), we can get the following pullback,

  \[\xymatrix{
&&0\ar[d]&0\ar[d]&\\
0 \ar[r]& K_{1}\ar[r]\ar@{=}[d]& M_{0}'\ar[r]\ar[d]& M\ar[r]\ar[d]&0\\
0 \ar[r]& K_{1}\ar[r]&M_{0}\ar[r]\ar[d]  & M\oplus P\ar[r]\ar[d]&0\\
        &                        & P\ar@{=}[r]\ar[d]& P\ar[d]&\\
&&0&0&\\
}\]
and we get the following two exact sequences
  \begin{align}\label{cite17.4}
0\longrightarrow K_{1}\longrightarrow M_{0}'\longrightarrow  M\longrightarrow 0
 \end{align}
 \begin{align}\label{cite17.5}
0\longrightarrow M'_{0}\longrightarrow M_{0}\longrightarrow  P\longrightarrow 0.
 \end{align}
By (\ref{cite17.5}) and $P$ projective, we have
  \begin{align}\label{cite17.6}
M_{0}\cong M_{0}'\oplus P.
 \end{align}
By (\ref{cite17.4}), we have the following exact sequence
  \begin{align}\label{cite17.4.1}
0\longrightarrow K_{1}\oplus P\longrightarrow M_{0}'\oplus P\longrightarrow  M\longrightarrow 0.
 \end{align}
By (\ref{cite17.4.1}) and (\ref{cite17.6}), we have following exact sequence
  \begin{align}\label{cite17.7}
0\longrightarrow K_{1}\oplus P\longrightarrow M_{0}\longrightarrow  M\longrightarrow 0.
 \end{align}
By (\ref{cite17.2}), we have following exact sequence
  \begin{align}\label{cite17.8}
0\longrightarrow K_{2}\longrightarrow M_{1}\oplus P\longrightarrow  K_{1}\oplus P\longrightarrow 0.
 \end{align}
By (\ref{cite17.1}) (\ref{cite17.8}) and (\ref{cite17.7}), we can get the exact sequence (\ref{cite17.0}).
\end{proof}
\begin{lemma}{\rm (\cite[Proposition 3.6]{auslander1997representation})}\label{selfinjective-syzygy}
  Let $\Lambda$ be a selfinjective Artin algebra.
  For integer $m\geqslant 0$, we have
  $$M\oplus Q_{1} \cong \Omega^{-m}(\Omega^{m}(M)) \oplus Q_{2}$$
  for some projective modules $Q_{1},Q_{2}$ in $\mod\Lambda.$
\end{lemma}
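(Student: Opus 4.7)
The plan is to argue by induction on $m$, with the base case $m=1$ isolated as the essential content. The key external input is the selfinjectivity hypothesis, which ensures that projective modules coincide with injective modules; this is what makes the projective cover sequence and the injective envelope sequence interact via classical Schanuel-type pushout arguments.

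For the base case $m=1$, I would start with the two defining short exact sequences
\begin{equation*}
0\longrightarrow \Omega(M)\longrightarrow P\longrightarrow M\longrightarrow 0, \qquad 0\longrightarrow \Omega(M)\longrightarrow I\longrightarrow \Omega^{-1}(\Omega(M))\longrightarrow 0,
\end{equation*}
coming from the projective cover $P\twoheadrightarrow M$ and the injective envelope $\Omega(M)\hookrightarrow I$. Because $\Lambda$ is selfinjective, $P$ is also injective and $I$ is also projective. Form the pushout of the two left-hand inclusions; this yields a module $P'$ together with two short exact sequences $0\to I\to P'\to M\to 0$ and $0\to P\to P'\to \Omega^{-1}(\Omega(M))\to 0$. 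Since $I$ is injective the first splits, giving $P'\cong I\oplus M$, and since $P$ is injective (selfinjectivity again) the second splits, giving $P'\cong P\oplus \Omega^{-1}(\Omega(M))$. Comparing the two decompositions of $P'$ produces
\begin{equation*}
M\oplus I \;\cong\; \Omega^{-1}(\Omega(M))\oplus P,
\end{equation*}
with both $I$ and $P$ projective. This settles $m=1$.

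For the inductive step, assume the statement holds for $m-1\geqslant 0$. First apply the $m=1$ case to the module $\Omega^{m-1}(M)$, obtaining projective modules $Q_1',Q_2'$ with
\begin{equation*}
\Omega^{m-1}(M)\oplus Q_1' \;\cong\; \Omega^{-1}(\Omega^m(M))\oplus Q_2'.
\end{equation*}
Next I would record the easy observation that, since projective covers and injective envelopes respect direct sums and kill projective summands, for any modules $X,Y$ an isomorphism $X\oplus Q_1 \cong Y\oplus Q_2$ with $Q_1,Q_2$ projective forces $\Omega^{-k}(X)\cong \Omega^{-k}(Y)$ for every $k\geqslant 0$. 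Applying $\Omega^{-(m-1)}$ to the displayed isomorphism therefore yields $\Omega^{-(m-1)}(\Omega^{m-1}(M))\cong \Omega^{-m}(\Omega^m(M))$, and combining this with the induction hypothesis applied to the left side finishes the argument.

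The only delicate point, and the step I expect to require the most care, is the handling of projective summands when passing through $\Omega^{-(m-1)}$: one needs the subsidiary observation above to ensure that an isomorphism modulo projectives really does descend after applying iterated cosyzygies. Once that bookkeeping is justified, the whole argument is just iterated Schanuel plus selfinjectivity.
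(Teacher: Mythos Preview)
The paper does not supply its own proof of this lemma; it is simply quoted from \cite[Proposition 3.6]{auslander1997representation} and used as a black box in the proof of Proposition~\ref{ITdim=Owres}. So there is no in-paper argument to compare against.

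That said, your proposal is correct and is essentially the standard proof. The $m=1$ step is the heart of the matter: the pushout of the projective-cover and injective-envelope sequences over $\Omega(M)$, together with the selfinjectivity hypothesis making both $P$ and $I$ injective, forces both resulting short exact sequences to split and yields $M\oplus I\cong \Omega^{-1}(\Omega(M))\oplus P$. Your inductive step is also fine; the ``subsidiary observation'' you flag---that an isomorphism $X\oplus Q_1\cong Y\oplus Q_2$ with $Q_1,Q_2$ projective(=injective) implies $\Omega^{-k}(X)\cong\Omega^{-k}(Y)$---is immediate from the uniqueness of injective envelopes, since the injective envelope of $X\oplus Q_1$ is $E(X)\oplus Q_1$ with cokernel $\Omega^{-1}(X)$. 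This is exactly the statement that $\Omega^{-1}$ is a well-defined functor on the stable category $\underline{\mod\Lambda}$, and the paper itself uses that language elsewhere (Lemma~\ref{HuPan0}). No gaps.
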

  \begin{proposition}\label{ITdim=Owres}
  Let $\Lambda$ be an Artin algebra. Suppose that $\Lambda$
  is selfinjective. Then $$\ITdim\Lambda=\Owresoldim \Lambda.$$
  \end{proposition}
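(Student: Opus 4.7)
The proof splits into the two inequalities, only one of which uses self-injectivity. The easy direction $\ITdim \Lambda \leqslant \Owresoldim \Lambda$ is valid for every artin algebra: if $\Owresoldim \Lambda = m$ is witnessed by a module $M$, then every $X \in \mod \Lambda$ sits in an exact sequence $0 \to M_m \to \cdots \to M_0 \to X \to 0$ with $M_i \in \add M$, and since $X = \Omega^0(X)$ this exhibits $\Lambda$ as an $(m,0)$-Igusa-Todorov algebra, giving $\ITdim \Lambda \leqslant m$ by Definition \ref{ITdim}.

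For the opposite inequality I will assume $\ITdim \Lambda = m$ and fix an integer $n$ together with a witness $V$ so that $\Lambda$ is $(m,n)$-Igusa-Todorov. The plan is to convert, for each $X \in \mod \Lambda$, the defining resolution $0 \to V_m \to \cdots \to V_0 \to \Omega^n(X) \to 0$ of $\Omega^n(X)$ into a length-$m$ resolution of $X$ itself whose terms lie in $\add(\Omega^{-n}(V) \oplus \Lambda)$. First I apply the cosyzygy version of the horseshoe lemma (Lemma \ref{shoelemma}) to produce
$$0 \to \Omega^{-n}(V_m) \to \Omega^{-n}(V_{m-1}) \oplus E_{m-1} \to \cdots \to \Omega^{-n}(V_0) \oplus E_0 \to \Omega^{-n}(\Omega^n(X)) \to 0,$$
where the $E_i$ are injective, hence projective by self-injectivity.

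Next, Lemma \ref{selfinjective-syzygy} supplies projectives $Q_1, Q_2$ with $\Omega^{-n}(\Omega^n(X)) \oplus Q_2 \cong X \oplus Q_1$. Adding the identity on $Q_2$ to the rightmost arrow transforms the target into $X \oplus Q_1$; then Lemma \ref{lemma-long-exact} is used to absorb the excess $Q_1$ into the next-to-last term, producing an exact sequence
$$0 \to \Omega^{-n}(V_m) \to \cdots \to \Omega^{-n}(V_1) \oplus E_1 \oplus Q_1 \to \Omega^{-n}(V_0) \oplus E_0 \oplus Q_2 \to X \to 0$$
whose middle terms all lie in $\add(\Omega^{-n}(V) \oplus \Lambda)$. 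Taking $\Omega^{-n}(V) \oplus \Lambda$ as the witness module gives $\Owresoldim \Lambda \leqslant m$.

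The main technical point will be the careful bookkeeping of projective summands accumulated at each step — from the horseshoe lemma and from the selfinjective syzygy isomorphism — so that they can all be reabsorbed into $\add \Lambda$ without extending the length of the resolution. The degenerate case $m = 0$ will need separate handling since Lemma \ref{lemma-long-exact} requires a non-trivial preceding term: there the sequence reduces to $V_0 \cong \Omega^n(X)$, so applying $\Omega^{-n}$ combined with the isomorphism from Lemma \ref{selfinjective-syzygy} places $X$ directly inside $\add(\Omega^{-n}(V) \oplus \Lambda)$, forcing $\Lambda$ to be representation-finite with $\Owresoldim \Lambda = 0$.
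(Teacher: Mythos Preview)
Your proof is correct and follows essentially the same route as the paper: both directions are argued identically, applying the cosyzygy horseshoe (Lemma \ref{shoelemma}) to the $(m,n)$-Igusa--Todorov resolution of $\Omega^{n}(X)$, invoking Lemma \ref{selfinjective-syzygy} to replace $\Omega^{-n}(\Omega^{n}(X))$ by $X$ up to projectives, and then using Lemma \ref{lemma-long-exact} to absorb the extra projective summand without increasing the length. Your version is in fact slightly more careful than the paper's --- you make explicit that the injectives $E_i$ are projective by self-injectivity (needed for them to land in $\add\Lambda$), and you separately handle the degenerate case $m=0$ where Lemma \ref{lemma-long-exact} is not applicable; the paper glosses over both points.
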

  \begin{proof}
  Set $\Owresoldim \Lambda=m$.
  Since $\Owresoldim \Lambda=m$, we know that $\Lambda$ is $(m,0)$-Igusa-Todorov algebra.
  By Definition \ref{ITdim}, we have $\ITdim\Lambda \leqslant m=\Owresoldim \Lambda.$

  Suppose that $\ITdim\Lambda=p$. By Definition \ref{ITdim}, we can set
  $\Lambda$ be a $(p,n)$-Igusa-Todorov algebra.
 By Definition \ref{mnITalgebra}, there exists a module $V$ such that
   for each module $M\in\mod\Lambda$, we have the following exact sequence
    $$0
    \longrightarrow V_{p}
    \longrightarrow V_{p-1}
    \longrightarrow
    \cdots
    \longrightarrow V_{1}
    \longrightarrow V_{0}
    \longrightarrow \Omega^{n}(M)
    \longrightarrow 0
    $$
  where $V_{i} \in [V]_{1}$ for each $0 \leqslant i \leqslant p$.
  By Lemma \ref{shoelemma}, we have the following exact sequence
   \begin{align}\label{cite2}
 0\rightarrow \Omega^{-m}(V_{p})
    \rightarrow \Omega^{-m}(V_{p-1})\oplus E_{p-1}
    \rightarrow
    \cdots
    \rightarrow \Omega^{-m}(V_{0})\oplus E_{0}
    \rightarrow \Omega^{-m}(\Omega^{m}(M))
    \rightarrow 0
   \end{align}
  where $\Omega^{-m}(V_{p}),\Omega^{-m}(V_{i})\oplus E_{i} \in [\Omega^{-m}(V)\oplus \Lambda]_{1}$ for each $0 \leqslant i \leqslant p-1$.

By Lemma \ref{selfinjective-syzygy} ,
we have
 \begin{align}\label{cite1}
M\oplus Q_{1} \cong \Omega^{-m}(\Omega^{m}(M)) \oplus Q_{2}.
\end{align}

By the above exact sequence (\ref{cite2}) and isomorphism (\ref{cite1}),
 we have the following exact sequence

  \begin{align}\label{cite3}
0
    \rightarrow \Omega^{-m}(V_{p})
    \rightarrow \Omega^{-m}(V_{p-1})\oplus E_{p-1}
    \rightarrow
    \cdots
    \rightarrow \Omega^{-m}(V_{1})\oplus E_{1}
    \rightarrow \Omega^{-m}(V_{0})\oplus E_{0}\oplus Q_{2}
    \rightarrow M\oplus Q_{1}
    \rightarrow 0
\end{align}
  where $\Omega^{-m}(V_{p}),\Omega^{-m}(V_{i})\oplus E_{i} \in [\Omega^{-m}(V)\oplus \Lambda]_{1}$ for each $0 \leqslant i \leqslant p-1$.

By Lemma \ref{lemma-long-exact} and the exact sequence (\ref{cite3}), we get the following exact sequence
  \begin{align}\label{cite4}
 0\longrightarrow \Omega^{-m}(V_{p})
    \longrightarrow
    \cdots
    \longrightarrow \Omega^{-m}(V_{1})\oplus E_{1}\oplus Q_{1}
    \longrightarrow \Omega^{-m}(V_{0})\oplus E_{0}\oplus Q_{2}
    \longrightarrow M
    \longrightarrow 0.
\end{align}
  By Definition \ref{def-2.2.1} and the exact sequence  (\ref{cite4}), we know that $\Owresoldim \Lambda \leqslant p=\ITdim \Lambda$.
  Moreover, $\Owresoldim \Lambda =\ITdim \Lambda$.
  \end{proof}

 By Remark \ref{remark1} and Proposition \ref{ITdim=Owres}, we have the following
 theorem, which tell us that the Igusa-Todorov distance may be very large.
  \begin{theorem}\label{cite16}
  Let $k$ be a field and $n$ positive integer. Then
  $\ITdim \bigwedge(k^{n})=n-1.$
  \end{theorem}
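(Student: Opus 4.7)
The plan is to deduce Theorem \ref{cite16} as an immediate combination of three ingredients already in the excerpt: that $\bigwedge(k^n)$ is selfinjective, the identification of $\ITdim$ with $\Owresoldim$ for selfinjective algebras (Proposition \ref{ITdim=Owres}), and the sandwich $\wrepdim - 2 \leqslant \Owresoldim \leqslant \repdim - 2$ together with Rouquier's calculation $\repdim \bigwedge(k^n) = \wrepdim \bigwedge(k^n) = n+1$ (Theorem \ref{repdim=wrepdim}).

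First I would note that $\bigwedge(k^n)$ is a finite-dimensional Frobenius algebra, hence in particular selfinjective, and non-semisimple (its radical is nonzero for $n\geqslant 1$, since the generators of the exterior algebra square to zero). Consequently Proposition \ref{ITdim=Owres} applies and gives
\[
\ITdim \bigwedge(k^n) \;=\; \Owresoldim \bigwedge(k^n).
\]

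Next I would invoke the chain of inequalities recorded in Remark \ref{remark}, valid for every non-semisimple artin algebra:
\[
\wrepdim \bigwedge(k^n) - 2 \;\leqslant\; \Owresoldim \bigwedge(k^n) \;\leqslant\; \repdim \bigwedge(k^n) - 2.
\]
By Theorem \ref{repdim=wrepdim} both outer quantities equal $(n+1)-2 = n-1$, so the sandwich forces $\Owresoldim \bigwedge(k^n) = n-1$, and combining with the previous display yields $\ITdim \bigwedge(k^n) = n-1$.

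There is essentially no main obstacle to this proof: all the work has already been done in Proposition \ref{ITdim=Owres}, Remark \ref{remark} and Theorem \ref{repdim=wrepdim}. The only small matter is to check the corner case $n=1$, where $\bigwedge(k) \cong k[x]/(x^2)$ is representation finite; here $\repdim = \wrepdim = 2$ and both bounds collapse to $0 = n-1$, so the sandwich argument continues to apply. Thus the theorem is a direct corollary of the structural results established earlier in the paper.
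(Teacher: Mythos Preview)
Your proposal is correct and follows exactly the paper's approach: the paper simply states that the theorem follows from Remark \ref{remark}, Theorem \ref{repdim=wrepdim} and Proposition \ref{ITdim=Owres}, and you have spelled out precisely this sandwich argument. Your additional verification of the corner case $n=1$ and the observation that $\bigwedge(k^n)$ is Frobenius (hence selfinjective and non-semisimple) are helpful details that the paper leaves implicit.
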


    \begin{corollary}{\rm (\cite{conde2016certain})}
  Let $k$ be a field and $n\geqslant 3$ positive integer. Then
  $\bigwedge(k^{n})$ is not IT.
  \end{corollary}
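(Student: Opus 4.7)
The plan is to derive this corollary immediately from Theorem \ref{cite16} together with the characterization of Igusa-Todorov algebras via their Igusa-Todorov dimension, which was highlighted in the abstract: an artin algebra $\Lambda$ is an Igusa-Todorov algebra if and only if $\ITdim \Lambda \leqslant 1$.

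First I would unpack that characterization from the definitions already given. By Remark \ref{mnITremark}(1), being $(1,n)$-Igusa-Todorov for some $n$ is exactly being $n$-Igusa-Todorov in the sense of Definition \ref{nITalgebra}; hence $\Lambda$ is an Igusa-Todorov algebra precisely when there exists some $n\geqslant 0$ with $\Lambda$ being $(1,n)$-Igusa-Todorov, which by Definition \ref{ITdim} is equivalent to $\ITdim \Lambda \leqslant 1$. (Note that $\ITdim \Lambda = 0$ means $\Lambda$ is $(0,n)$-Igusa-Todorov, i.e.\ $n$-syzygy-finite, a fortiori an Igusa-Todorov algebra.)

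Next I apply Theorem \ref{cite16}, which gives $\ITdim \bigwedge(k^n) = n-1$. For $n\geqslant 3$ this yields $\ITdim \bigwedge(k^n) \geqslant 2 > 1$, so $\bigwedge(k^n)$ fails the characterization above and therefore is not Igusa-Todorov.

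There is no real obstacle here: the statement is essentially a reformulation of Theorem \ref{cite16}. The only small point that needs care is making the reduction to the characterization explicit, since the equivalence ``Igusa-Todorov iff $\ITdim \leqslant 1$'' is only stated in the abstract and must be extracted from Definition \ref{ITdim} and Remark \ref{mnITremark}(1) when writing out the proof.
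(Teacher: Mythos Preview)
Your proposal is correct and is exactly the argument the paper intends: the corollary is placed immediately after Theorem \ref{cite16} with no separate proof, since once $\ITdim \bigwedge(k^{n})=n-1\geqslant 2$ the characterization ``Igusa-Todorov $\Leftrightarrow$ $\ITdim\leqslant 1$'' (which you correctly extract from Definition \ref{ITdim} and Remark \ref{mnITremark}(1)) finishes the job.
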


Let $\Lambda$ be an Artin algebra. The stable category of $\mod\Lambda$,
denoted by $\underline{\mod\Lambda}$, is defined to be the additive quotient
$\mod\Lambda/\add\Lambda$, where the objects are the same as those  in $\mod\Lambda$
and the morphism space $\Hom_{\underline{\mod\Lambda}}(X,Y)$ is the quotient space
of $\Hom_{\mod\Lambda}(X,Y)$ modulo all morphisms factorizing through projective modules.
Two objects $X$ and $Y$ are isomorphic in $\underline{\mod\Lambda}$ if and only
if there are projective modules $P$ and $Q$ such that
$X\oplus Q\cong Y\oplus P$ in $\mod\Lambda.$

By \cite[Theorem 3.2 and Theorem 3.3]{bergh2008representation} and \cite[Proposition 3.7]{rouquier2006representation}, we have
the following theorem. The complexity of $\Lambda/\rad \Lambda$ and the \textbf{Fg}
conditions can be seen in \cite{bergh2008representation}.
\begin{theorem}\label{inequality}
  If $\Lambda$ is a non-semisimple selfinjective algebra and {\rm \textbf{Fg} }holds, then
  $$\cx \Lambda/\rad\Lambda +1\leqslant \tridim \underline{\mod\Lambda }+2\leqslant\wrepdim\Lambda\leqslant \repdim \Lambda\leqslant \LL(\Lambda).$$
\end{theorem}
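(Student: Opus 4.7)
The plan is to establish the chain of four inequalities separately. Call them (I)--(IV) from left to right. The outer two rely on cited results, while the middle two follow by unfolding the definitions developed in this section.

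For (I), the inequality $\cx\Lambda/\rad\Lambda+1\leqslant\tridim\underline{\mod\Lambda}+2$ under the \textbf{Fg} hypothesis is exactly the content of \cite[Theorems 3.2 and 3.3]{bergh2008representation}: if the stable category is built from a single object in a bounded number of cones, then the minimal projective resolution of $\Lambda/\rad\Lambda$ is forced to grow at a polynomial rate controlled by that bound, and this rate is by definition $\cx\Lambda/\rad\Lambda$.

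For (II), $\tridim\underline{\mod\Lambda}+2\leqslant\wrepdim\Lambda$: since $\Lambda$ is selfinjective, $\mod\Lambda$ is Frobenius and $\underline{\mod\Lambda}$ inherits a triangulated structure in which every short exact sequence becomes a triangle with shift given by $\Omega^{-1}$. If $\wrepdim\Lambda=n$ is realized by a module $M$ as in Definition \ref{def-wresolution}, then for any $L\in\mod\Lambda$ we have a bounded $\add M$-complex $0\to C_r\to\cdots\to C_s\to 0$ with $r-s\leqslant n-2$ whose only nonzero homology is $H_0$, which contains $L$ as a direct summand. Splicing the complex into $r-s$ short exact sequences and applying Lemma \ref{lem2.5} repeatedly in $\underline{\mod\Lambda}$ exhibits $L$ as lying in $\langle M\rangle_{n-1}$, giving $\tridim\underline{\mod\Lambda}\leqslant n-2$. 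This is essentially the argument behind \cite[Proposition 3.7]{rouquier2006representation}.

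For (III), $\wrepdim\Lambda\leqslant\repdim\Lambda$: take a generator-cogenerator $M$ achieving $\repdim\Lambda=\gldim\End_\Lambda(M)=r$. By Auslander's correspondence, for each $L\in\mod\Lambda$ the $\End_\Lambda(M)^{\rm op}$-module $\Hom_\Lambda(M,L)$ admits a projective resolution of length $\leqslant r-2$, which pulls back along the equivalence $\add M\simeq\proj\End_\Lambda(M)^{\rm op}$ to an exact sequence $0\to M_{r-2}\to\cdots\to M_0\to L\to 0$ with $M_i\in\add M$. Viewed as a complex, this witnesses $\wrepdim\Lambda\leqslant r$. For (IV), $\repdim\Lambda\leqslant\LL(\Lambda)$ is Auslander's classical bound, attained by the generator-cogenerator $\bigoplus_{i=1}^{\LL(\Lambda)}\Lambda/\rad^i\Lambda$.

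The main obstacle is (I): converting a building bound in $\underline{\mod\Lambda}$ into a polynomial bound on Betti numbers of the simples requires the \textbf{Fg} hypothesis to guarantee that the relevant Hochschild cohomology ring is Noetherian and that modules over it have finite, well-defined complexity. This is the nontrivial content of \cite{bergh2008representation}; once it is in hand, the other three steps are essentially definitional translations together with Auslander's generator-cogenerator argument.
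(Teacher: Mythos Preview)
Your proposal is correct and matches the paper's approach: the paper gives no argument beyond the sentence ``By \cite[Theorem 3.2 and Theorem 3.3]{bergh2008representation} and \cite[Proposition 3.7]{rouquier2006representation}, we have the following theorem'', so you have simply unpacked those citations inequality by inequality, adding the standard Auslander arguments for (III) and (IV) that the paper leaves implicit. Your sketches of (II) and (III) are accurate reconstructions of the cited results.
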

\begin{example}{\rm
  Let $k$ be a field,and let $n\geqslant 1$ be an integer, and let
  $\Lambda$ be the quantum exterior algebra
  $$\Lambda=k\langle X_{1},\cdots, X_{n}\rangle/(X_{i}^{2},\{X_{i}X_{j}-q_{ij}X_{j}X_{i}\}_{i<j}),$$
  where $0\neq q_{ij}\in k$ and all the $q_{ij}$ are the roots of unity.

 By  \cite[Page 398, Examples (i)]{bergh2008representation}, we know that
 $\Lambda$ is selfinjective, and $\LL(\Lambda)=n+1,$ and $\repdim \Lambda=n+1,$ and $\cx (\Lambda/\rad\Lambda)=n.$
And by Theorem \ref{inequality}, we have $\wrepdim \Lambda=n+1.$
And by Remark \ref{remark}, we have $\extdim \Lambda=\Owresoldim \Lambda=n-1$.
And by Proposition \ref{ITdim=Owres}, we have $\ITdim\Lambda=n-1.$
If $n>2$, we have $\ITdim\Lambda=n-1>1,$ and hence $\Lambda$ is not Igusa-Todorov in this case.
  }
\end{example}

\section{Igusa-Todorov distance is an invariant under derived equivalence}
 We will review some of the basic facts and conclusions, as detailed in reference \cite{huwei2017Stable}.

 Given two Artin algebras $A$ and $B$.  Let $F:D^{b}(\mod A)\longrightarrow D^{b}(\mod B)$ be derived equivalence.
 We can define a functor $\overline{F}:\underline{\mod A}\longrightarrow \underline{\mod B}$,
 which is called the stable functor of $F$.

 \begin{lemma}{\rm (\cite[Proposition 4.1]{huwei-XiDerived}\cite[Example 4.7(b)]{huwei2017Stable})}\label{HuPan0}
  Given  Artin algebra $A$.
 Let $n$ be a nonnegative integer. Then $n$th syzygy functor
 $\Omega^{n}_{A}:\underline{\mod A}\longrightarrow\underline{\mod A}$
 is a stable functor of the derived equivalence $[-n]:D^{b}(\mod A)\longrightarrow D^{b}(\mod A)$.
 \end{lemma}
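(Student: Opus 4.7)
The plan is to unfold the construction of the stable functor of a derived equivalence and verify by direct computation that, when $F=[-n]$, the resulting stable functor agrees with $\Omega^{n}_{A}$. Following Hu--Xi, the stable functor $\overline{F}:\underline{\mod A}\to\underline{\mod B}$ of a derived equivalence $F$ is obtained as follows: for each $M\in\mod A$ one chooses a representative of $F(M)$ by a bounded complex $Q^{\bullet}$ of $B$-modules concentrated in non-negative degrees, with $Q^{i}$ projective for all $i>0$; then $\overline{F}(M)$ is declared to be the degree-zero term $Q^{0}$, viewed in the stable category. Morphisms are lifted to chain maps between such representatives and read off in degree zero, the choice being unique up to maps factoring through projectives.

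First I would treat the action on objects. Given $M\in\mod A$, take a projective resolution of $M$ and form the truncation
\begin{equation*}
Q^{\bullet}\;=\;\bigl[\,0\longrightarrow \Omega^{n}(M)\longrightarrow P^{-n+1}\longrightarrow\cdots\longrightarrow P^{-1}\longrightarrow P^{0}\longrightarrow 0\,\bigr]
\end{equation*}
with $\Omega^{n}(M)$ placed in cohomological degree $-n$. Since this truncation is quasi-isomorphic to $M$, it represents $M$ in $D^{b}(\mod A)$. Applying the shift $[-n]$ moves $\Omega^{n}(M)$ into degree $0$ and the projectives $P^{-n+1},\dots,P^{0}$ into degrees $1,\dots,n$; the resulting complex represents $M[-n]$ and has exactly the form prescribed by the stable-functor construction. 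Reading off the degree-zero term therefore yields $\overline{[-n]}(M)\cong\Omega^{n}(M)$ in $\underline{\mod A}$.

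Next I would check naturality. A module map $f:M\to N$ lifts (non-canonically) to a chain map between the two truncated complexes; after the shift $[-n]$ its degree-zero component is a lift $\Omega^{n}(f):\Omega^{n}(M)\to\Omega^{n}(N)$, which agrees with the usual syzygy map up to projective summands. Two such lifts differ by a null-homotopy of chain maps, and restricting the homotopy to the relevant degrees exhibits the difference as a morphism factoring through the projective in degree $1$, hence trivial in $\underline{\mod A}$. This promotes the pointwise identification to a natural isomorphism $\overline{[-n]}\cong\Omega^{n}_{A}$ of functors on $\underline{\mod A}$.

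The main obstacle is the bookkeeping around the various choices (projective summands absorbed into the degree-zero term, non-uniqueness of chain-map lifts, dependence on the projective resolution). The substantive content of the lemma is the compatibility of the abstract Hu--Xi recipe with the elementary horseshoe-style definition of $\Omega^{n}_{A}$; once one observes that both procedures read the module $\Omega^{n}(M)$ off the same spot of the same representative complex and intertwine lifts of morphisms modulo projective noise, the result follows formally, which is presumably why the author only cites it.
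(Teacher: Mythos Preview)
The paper gives no proof of this lemma; it simply cites Hu--Xi and Hu--Pan, so there is nothing internal to compare against. Your reconstruction is correct and is essentially the argument found in those references: the truncated projective resolution shifted by $[-n]$ is precisely the representative of $M[-n]$ required by the stable-functor recipe (projective in positive degrees, module in degree zero), reading off degree zero yields $\Omega^n(M)$, and your naturality check via chain-map lifts modulo homotopy is standard.
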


  \begin{lemma}{\rm (\cite[Theorem 4.11]{huwei2017Stable})}\label{HuPan1}
  Given two Artin algebras $A$ and $B$.
Let $F:D^{b}(\mod A)\longrightarrow D^{b}(\mod B)$ and $G:D^{b}(\mod B)\longrightarrow D^{b}(\mod C)$
be two triangle functors.
Then the functors $\overline{G}\circ \overline{F}$ and $\overline{G\circ F} $ are
isomorphic.
 \end{lemma}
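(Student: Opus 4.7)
The plan is to unpack the explicit construction of the stable functor $\overline{F}$ and then verify the composition law by tracking both sides through this construction. Recall (from the Hu-Pan framework) that for a derived equivalence $F:D^b(\mod A)\to D^b(\mod B)$ one chooses, for each $M\in\mod A$, a representative of $F(M)$ in $D^b(\mod B)$ that is a bounded complex of the form $0\to Q^{-n}\to \cdots \to Q^{-1}\to Q^0\to 0$ whose terms in negative degrees lie in an appropriate additive subcategory (e.g.\ projective, or generated by a tilting complex); the value $\overline{F}(M)$ is then obtained by peeling off the projective summands from a suitable syzygy or cokernel to give a well-defined object of $\underline{\mod B}$. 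The first task is to pin down which representative of $F(M)$ one works with and to note that the construction descends to a functor on the stable category, independently of the chosen representative and lifting of morphisms.

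First I would fix, once and for all, a functorial choice of ``canonical'' complex $P_M^\bullet \xrightarrow{\sim} M$ in $D^b(\mod A)$ (e.g.\ the minimal projective resolution, suitably truncated) and the analogous choices in $\mod B$. Then $\overline{F}(M)$ arises from normalizing $F(P_M^\bullet)$ in $D^b(\mod B)$, and $\overline{G}\,\overline{F}(M)$ arises from normalizing $G(P_{\overline{F}(M)}^\bullet)$ in $D^b(\mod C)$, while $\overline{GF}(M)$ is the normalization of $(GF)(P_M^\bullet)$. The central observation is that, in $D^b(\mod B)$, the stalk complex on $\overline{F}(M)$ differs from $F(P_M^\bullet)$ only by a contractible complex of projectives (this is essentially the definition of the normalization). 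Applying the triangle functor $G$ preserves this triangle, so $G$ applied to the stalk complex on $\overline{F}(M)$ and $(GF)(P_M^\bullet)$ agree in $D^b(\mod C)$ up to a contractible complex of $G$-images of projectives — and projective modules map, under the stable functor of any derived equivalence, to projectives in the stable category (i.e.\ to zero in $\underline{\mod C}$). Running the normalization on either side therefore yields the same object of $\underline{\mod C}$, giving $\overline{GF}(M)\cong \overline{G}\,\overline{F}(M)$ pointwise.

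The main obstacle is naturality, not pointwise identification. One must show that the chosen pointwise isomorphisms assemble into a \emph{natural} isomorphism of functors $\underline{\mod A}\to \underline{\mod C}$. This requires: (i) lifting each morphism $f\colon M\to N$ in $\mod A$ to a chain map between canonical complexes, then through $F$ (and $G$), and checking that two different lifts differ by a null-homotopic map, so that the induced maps in the stable category are well-defined; (ii) verifying that the normalization step is itself functorial up to stably trivial correction terms, so that it commutes with the action of $G$ up to a coherent system of isomorphisms. The delicate bookkeeping of these compatibility squares — especially controlling how projective summands introduced at each normalization interact with the triangle functor $G$ — is where the real work lies, and is handled in Hu-Pan by showing that the construction factors through a zigzag of derived equivalences between ``$\nu$-stable'' pieces on which the stable functor has a transparent module-theoretic description.
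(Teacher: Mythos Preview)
The paper does not prove this lemma at all: it is quoted verbatim as \cite[Theorem 4.11]{huwei2017Stable} and used as a black box. So there is no ``paper's own proof'' to compare your sketch against; the intended argument here is simply the citation.

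As for the sketch itself, it is a plausible outline of the Hu--Pan strategy but not a proof. Two concrete issues. First, your description of the construction of $\overline{F}$ is too loose: in Hu--Pan the stable functor is defined for a \emph{nonnegative} functor between derived categories (not an arbitrary triangle functor), and $\overline{F}(M)$ is a specific module extracted from a distinguished triangle relating $F(M)$ to a bounded complex of projectives, not a vague ``normalization.'' Without pinning this down you cannot actually carry out the comparison you describe, and in particular the claim that the discrepancy between $F(M)$ and the stalk on $\overline{F}(M)$ is ``a contractible complex of projectives'' is not literally how it works --- it is a perfect complex sitting in a triangle, which is a weaker statement and forces you to argue via triangles rather than via direct-sum decompositions. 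Second, you yourself flag naturality as ``where the real work lies'' and then defer it to Hu--Pan; that is exactly the content of their Theorem 4.11, so your write-up ends where the proof begins. (The ``$\nu$-stable'' language also does not match Hu--Pan's setup and appears to be imported from elsewhere.)

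In the context of this paper the right move is the one the author makes: cite the result and move on.
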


  \begin{lemma}{\rm (\cite[Corollary 4.12]{huwei2017Stable})}\label{HuPan2}
   Given two Artin algebras $A$ and $B$.
Let $F:D^{b}(\mod A)\longrightarrow D^{b}(\mod B)$
be a triangle functor.
Then $\overline{F}\circ \Omega_{A}\simeq  \Omega_{B}\circ \overline{F}$.
 \end{lemma}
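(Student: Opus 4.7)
The plan is to deduce this statement directly from the two preceding lemmas (\ref{HuPan0} and \ref{HuPan1}) together with the formal fact that any triangle functor between triangulated categories commutes with the shift functor up to natural isomorphism.

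First, I would apply Lemma \ref{HuPan0} with $n=1$ to both sides: the syzygy functor $\Omega_A:\underline{\mod A}\to\underline{\mod A}$ is (isomorphic to) the stable functor $\overline{[-1]_A}$ of the shift $[-1]:D^{b}(\mod A)\to D^{b}(\mod A)$, and analogously $\Omega_B\simeq \overline{[-1]_B}$. Second, because $F$ is a triangle functor, there is a natural isomorphism $F\circ[-1]_A\simeq [-1]_B\circ F$ of triangle functors $D^{b}(\mod A)\to D^{b}(\mod B)$; this is built into the definition of a triangle functor.

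Third, I would apply Lemma \ref{HuPan1} twice to convert compositions of stable functors into stable functors of compositions, giving the chain
\begin{align*}
\overline{F}\circ \Omega_A
&\simeq \overline{F}\circ \overline{[-1]_A}
\simeq \overline{F\circ[-1]_A} \\
&\simeq \overline{[-1]_B\circ F}
\simeq \overline{[-1]_B}\circ \overline{F}
\simeq \Omega_B\circ \overline{F}.
\end{align*}
The only small point to verify is that the formation of the stable functor $\overline{(-)}$ respects isomorphisms of triangle functors, so that $F\circ[-1]_A\simeq[-1]_B\circ F$ passes to $\overline{F\circ[-1]_A}\simeq\overline{[-1]_B\circ F}$; this is standard from the construction of $\overline{(-)}$ in \cite{huwei2017Stable}.

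There is no real obstacle here: once one notices that $\Omega$ is the stable avatar of the inverse shift, the statement reduces to a formal juggling of the functorial identities already established. The mildly delicate step is just bookkeeping with the sign/direction of the shift (making sure that it is $[-1]$, not $[1]$, whose stable functor is $\Omega$), but Lemma \ref{HuPan0} pins this down.
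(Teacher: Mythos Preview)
Your argument is correct: once $\Omega$ is identified with the stable functor of $[-1]$ (Lemma~\ref{HuPan0}) and stable functors compose (Lemma~\ref{HuPan1}), the commutation of a triangle functor with the shift immediately gives $\overline{F}\circ\Omega_A\simeq\Omega_B\circ\overline{F}$. The paper itself gives no proof of this lemma---it is simply quoted as \cite[Corollary~4.12]{huwei2017Stable}---so there is nothing to compare against; your deduction from the preceding lemmas is exactly the intended one-line derivation, and indeed is how the result is obtained in the cited source.
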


  \begin{lemma}{\rm (\cite[Proposition 4.13]{huwei2017Stable})}\label{HuPan3}
   Given two Artin algebras $A$ and $B$.
Let $F:D^{b}(\mod A)\longrightarrow D^{b}(\mod B)$
be a triangle functor. Suppose that
$$0 \longrightarrow X\longrightarrow Y \longrightarrow Z\longrightarrow 0$$
is an exact sequence in $\mod A$.
Then there is an exact sequence
$$0 \longrightarrow  \overline{F}(X)\longrightarrow \overline{F}(Y)\oplus P \longrightarrow \overline{F}(Z)\longrightarrow 0$$
in $\mod B$ for some projective module $P$.
 \end{lemma}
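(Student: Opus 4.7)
The plan is to lift the short exact sequence to a distinguished triangle in $D^{b}(\mod A)$, push it forward through the triangle functor $F$, and then descend the resulting triangle in $D^{b}(\mod B)$ to a short exact sequence in $\mod B$ via the construction of $\overline{F}$. First, the sequence $0\to X\to Y\to Z\to 0$ in $\mod A$ produces the standard distinguished triangle $X\to Y\to Z\to X[1]$ in $D^{b}(\mod A)$, and $F$ transports it to the distinguished triangle $F(X)\to F(Y)\to F(Z)\to F(X)[1]$ in $D^{b}(\mod B)$.

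The heart of the argument is the comparison between $F(M)$ and $\overline{F}(M)$. By the Hu-Xi construction referenced above, each $\overline{F}(M)$ is obtained from a specific radical presentation of $M$ so that $F(M)$ and $\overline{F}(M)$ agree in $D^{b}(\mod B)$ modulo a bounded complex of projective $B$-modules (that is, up to objects in $K^{b}(\proj B)$). Applying this comparison at each of the vertices $X$, $Y$, $Z$ and assembling the three comparison triangles with the triangle above by the octahedral axiom, one extracts a distinguished triangle
$$\overline{F}(X)\longrightarrow \overline{F}(Y)\oplus P\longrightarrow \overline{F}(Z)\longrightarrow \overline{F}(X)[1]$$
in $D^{b}(\mod B)$ whose middle term absorbs the perfect-complex discrepancy $P$. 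Since $\overline{F}(X),\overline{F}(Y),\overline{F}(Z)$ are concentrated in degree zero, the long exact cohomology sequence associated with the triangle forces $P$ to be concentrated in degree zero as well, hence a genuine projective $B$-module; the triangle then collapses to the short exact sequence $0\to \overline{F}(X)\to \overline{F}(Y)\oplus P\to \overline{F}(Z)\to 0$ in $\mod B$.

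The main obstacle is the octahedral manipulation in the middle step. The identification of $F$ with $\overline{F}$ is not literally natural — only natural up to a controlled perfect-complex summand — so one must choose radical presentations of $X$, $Y$, $Z$ that are compatible with the given exact sequence before applying $F$. Concretely, this requires a horseshoe-type lifting of the chosen radical presentations along $0\to X\to Y\to Z\to 0$, together with a careful $3\times 3$ diagram chase in $D^{b}(\mod B)$, to ensure that the total discrepancy lands in a single projective $B$-module $P$ in the middle rather than distributed across all three vertices. Once this compatibility is secured, the rest of the argument is cohomological bookkeeping.
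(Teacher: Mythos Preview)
The paper does not supply a proof of this lemma; it is quoted verbatim as \cite[Proposition 4.13]{huwei2017Stable} and used as a black box. So there is no in-paper argument to compare against.

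Your outline is in the right spirit and matches the strategy of the original Hu--Pan proof: push the triangle $X\to Y\to Z\to X[1]$ through $F$, compare each $F(M)$ with $\overline{F}(M)$ up to a perfect complex, and reassemble. However, your description of the comparison step is inaccurate. The stable functor $\overline{F}$ is \emph{not} built from ``radical presentations'' of $M$ on the $A$-side; rather, for a nonnegative functor $F$ one has, for each $A$-module $M$, a distinguished triangle
\[
U_{M}\longrightarrow F(M)\longrightarrow \overline{F}(M)\longrightarrow U_{M}[1]
\]
in $D^{b}(\mod B)$ with $U_{M}\in K^{b}(\proj B)$, arising from the structure of $F(M)$ itself (essentially a truncation). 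These comparison triangles are functorial in $M$, so compatibility along the short exact sequence comes for free and no horseshoe lift on the $A$-side is needed. The assembly is then a direct $3\times 3$ diagram in $D^{b}(\mod B)$, not an ad hoc octahedral manipulation, and the fact that the discrepancy $P$ is a single projective module in degree zero follows because the $U_{M}$'s live in nonpositive degrees while the $\overline{F}(M)$'s are stalk complexes in degree zero. So the skeleton of your argument is right, but the mechanism you propose for the ``main obstacle'' is not the one that actually works.
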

 By Lemma \ref{HuPan3}, we can get
 \begin{corollary}\label{HuPan4}
  Given two Artin algebras $A$ and $B$.
   Let $F:D^{b}(\mod A)\longrightarrow D^{b}(\mod B)$
be a triangle functor. Suppose that
$$0 \longrightarrow X_{k}\longrightarrow X_{k-1} \longrightarrow \cdots
\longrightarrow X_{1} \longrightarrow X_{0}\longrightarrow 0$$
is an exact sequence in $\mod A$.
Then there is an exact sequence
$$0 \longrightarrow \overline{F}(X_{k})\longrightarrow \overline{F}(X_{k-1})\oplus P_{k-1} \longrightarrow \cdots
\longrightarrow \overline{F}(X_{1})\oplus P_{1} \longrightarrow \overline{F}(X_{0})\longrightarrow 0$$
in $\mod B$ for some projective modules $P_{i}$, where $0\leqslant i\leqslant k-1$.

 \end{corollary}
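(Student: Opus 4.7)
The plan is to prove Corollary \ref{HuPan4} by induction on $k$, using Lemma \ref{HuPan3} as the base case and the principal tool for the inductive step. The base case $k=2$ is exactly Lemma \ref{HuPan3}, producing the projective summand $P_1$ on the middle term. (The trivial case $k=1$ gives $X_1\cong X_0$, so $\overline{F}(X_1)\cong\overline{F}(X_0)$ and there is nothing to do.)

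For the inductive step, assume the conclusion for exact sequences of length at most $k$, and consider an exact sequence
$$0 \longrightarrow X_{k}\longrightarrow X_{k-1} \longrightarrow \cdots \longrightarrow X_{1} \longrightarrow X_{0}\longrightarrow 0$$
in $\mod A$ with $k\geqslant 2$. Set $K:=\Im(X_{1}\to X_{0})=\Ker(X_{0}\to 0)$ rewritten as the image, and split this sequence into
$$(\ast)\colon 0\longrightarrow X_{k}\longrightarrow X_{k-1}\longrightarrow\cdots\longrightarrow X_{2}\longrightarrow K\longrightarrow 0,$$
$$(\ast\ast)\colon 0\longrightarrow K\longrightarrow X_{1}\longrightarrow X_{0}\longrightarrow 0.$$
By the inductive hypothesis applied to $(\ast)$, there exist projective $B$-modules $P_{k-1},\ldots,P_{2}$ and an exact sequence
$$0\longrightarrow \overline{F}(X_{k})\longrightarrow \overline{F}(X_{k-1})\oplus P_{k-1}\longrightarrow\cdots\longrightarrow \overline{F}(X_{2})\oplus P_{2}\longrightarrow \overline{F}(K)\longrightarrow 0.$$
By Lemma \ref{HuPan3} applied to $(\ast\ast)$, there exist a projective $B$-module $P_{1}$ and an exact sequence
$$0\longrightarrow \overline{F}(K)\longrightarrow \overline{F}(X_{1})\oplus P_{1}\longrightarrow \overline{F}(X_{0})\longrightarrow 0.$$

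The final step is to splice these two exact sequences along the common term $\overline{F}(K)$: replace the map $\overline{F}(X_{2})\oplus P_{2}\to \overline{F}(K)$ and the map $\overline{F}(K)\to \overline{F}(X_{1})\oplus P_{1}$ by their composition. Since the first of these is surjective and the second is injective, the image of the composition coincides with the image of $\overline{F}(K)\hookrightarrow \overline{F}(X_{1})\oplus P_{1}$, which equals the kernel of $\overline{F}(X_{1})\oplus P_{1}\to \overline{F}(X_{0})$; and its kernel coincides with the preimage in $\overline{F}(X_{2})\oplus P_{2}$ of $0\hookrightarrow\overline{F}(K)$, i.e., the image of $\overline{F}(X_{3})\oplus P_{3}\to\overline{F}(X_{2})\oplus P_{2}$. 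This yields the desired exact sequence of length $k+1$ in $\mod B$ with the prescribed projective summands $P_{k-1},\ldots,P_{1}$.

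There is no substantial obstacle here: the only subtle point is tracking that the projectives $P_{i}$ appear only on middle terms (indices $1\leqslant i\leqslant k-1$), which is automatic because Lemma \ref{HuPan3} introduces the projective only on the middle term of each short exact sequence, and the splicing identifies two outer terms (both equal to $\overline{F}(K)$), neither of which carries a projective summand. Induction therefore terminates cleanly and produces precisely the statement claimed.
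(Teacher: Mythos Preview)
Your approach is correct and is exactly what the paper intends: it simply states ``By Lemma~\ref{HuPan3}'' and leaves the (obvious) induction to the reader, which you have carried out. One small slip to fix: your definition of $K$ should be $K:=\Im(X_{2}\to X_{1})=\Ker(X_{1}\to X_{0})$, not $\Im(X_{1}\to X_{0})$; the sequences $(\ast)$ and $(\ast\ast)$ you wrote down are already consistent with this corrected definition, so nothing else changes.
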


\begin{theorem}\label{cite11}
If Artin algebra $A$ and $B$ are derived equivalent, then $\ITdim A=\ITdim B.$
\end{theorem}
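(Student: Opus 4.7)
The plan is to transport an $(m,n)$-Igusa-Todorov structure from $A$ to $B$ through the stable functor of the derived equivalence; combined with symmetry, this forces $\ITdim A = \ITdim B$. Fix a derived equivalence $F\colon D^{b}(\mod A)\to D^{b}(\mod B)$ with quasi-inverse $G$, and write $\overline{F},\overline{G}$ for their stable functors. By Lemma \ref{HuPan1}, $\overline{F}\circ\overline{G}\simeq \Id_{\underline{\mod B}}$, so for each $N\in\mod B$ there exist projectives $P,Q\in\mod B$ with $\overline{F}(\overline{G}(N))\oplus P\cong N\oplus Q$. Iterating Lemma \ref{HuPan2} gives $\overline{F}\circ\Omega_A^{n}\simeq \Omega_B^{n}\circ\overline{F}$, and combining these isomorphisms yields projectives $P',Q'$ in $\mod B$ with $\overline{F}(\Omega_A^{n}(\overline{G}(N)))\oplus P'\cong \Omega_B^{n}(N)\oplus Q'$.

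Now suppose $A$ is $(m,n)$-IT with IT-module $V$. Given $N\in\mod B$, set $M:=\overline{G}(N)$; the defining property of $V$ supplies an exact sequence
\[
0\to V_m\to V_{m-1}\to \cdots\to V_0\to \Omega_A^{n}(M)\to 0
\]
in $\mod A$ with $V_i\in\add V$. Applying Corollary \ref{HuPan4} produces an exact sequence
\[
0\to \overline{F}(V_m)\to \overline{F}(V_{m-1})\oplus P_{m-1}\to \cdots \to \overline{F}(V_0)\oplus P_0\to \overline{F}(\Omega_A^{n}(M))\to 0
\]
in $\mod B$ with each $P_i$ projective. Adjoining the identity on $P'$ to both $\overline{F}(V_0)\oplus P_0$ and $\overline{F}(\Omega_A^{n}(M))$ and substituting the isomorphism $\overline{F}(\Omega_A^{n}(M))\oplus P'\cong \Omega_B^{n}(N)\oplus Q'$ rewrites the last term as $\Omega_B^{n}(N)\oplus Q'$. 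Lemma \ref{lemma-long-exact} then absorbs the trailing projective $Q'$ into the preceding term, yielding an exact sequence
\[
0\to \overline{F}(V_m)\to W_{m-1}\to \cdots \to W_0\to \Omega_B^{n}(N)\to 0
\]
in which each $W_i$ is the direct sum of some $\overline{F}(V_j)$ and a projective. Setting $W:=\overline{F}(V)\oplus B$, every left-hand term lies in $\add W$, so $B$ is $(m,n)$-IT with IT-module $W$. Hence $\ITdim B\leq m$, and taking the infimum over such $m$ gives $\ITdim B\leq \ITdim A$; interchanging the roles of $A,B$ and of $F,G$ yields the reverse inequality.

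The main obstacle is the careful accounting of projective summands: the stable functor is only well-defined up to projective direct summands and is additive only modulo projectives, so an exact sequence in $\mod A$ does not map directly to an exact sequence in $\mod B$ under $\overline{F}$, but only up to extra projective summands appearing at each term, and the isomorphism $\overline{F}\circ\overline{G}\simeq \Id$ holds only in the stable category. Lemmas \ref{HuPan1}--\ref{HuPan3}, Corollary \ref{HuPan4}, and Lemma \ref{lemma-long-exact} each correct one source of this slack; the proof is the careful composition of these corrections so that the resulting sequence in $\mod B$ has the precise form required by Definition \ref{mnITalgebra}.
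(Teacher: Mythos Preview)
Your approach is essentially the paper's, and the overall strategy is sound: transport the $(m,n)$-Igusa--Todorov structure through the stable functor, absorb the stray projectives via Corollary~\ref{HuPan4} and Lemma~\ref{lemma-long-exact}, and appeal to symmetry. However, there is one technical point you have glossed over, and it is exactly the point the paper takes care with.

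The stable functor $\overline{F}$ of Hu--Pan is only defined for \emph{nonnegative} triangle functors. If $F$ is a derived equivalence which (after a harmless shift) is nonnegative with tilting complex concentrated in degrees $0,-1,\dots,-p$, its quasi-inverse $G$ is generally \emph{not} nonnegative, so your $\overline{G}$ is not defined, and the assertion $\overline{F}\circ\overline{G}\simeq\Id_{\underline{\mod B}}$ is not available as stated. The paper avoids this by never invoking $\overline{G}$: it uses $\overline{F}$ and $\overline{G[-p]}$, both of which are nonnegative, and then computes via Lemmas~\ref{HuPan0}--\ref{HuPan2} that
\[
\overline{G[-p]}\circ\Omega_B^{\,n}\circ\overline{F}\ \simeq\ \overline{G[-p]\circ[-n]\circ F}\ \simeq\ \overline{[-p-n]}\ \simeq\ \Omega_A^{\,p+n}.
\]
The upshot is that one obtains an $(m,\,p+n)$-Igusa--Todorov structure on the target algebra, not an $(m,n)$ one; your claim that $B$ becomes $(m,n)$-IT with the same $n$ is too strong. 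Since $\ITdim$ is defined as the infimum over the first parameter $m$ alone, this shift in the second parameter is harmless for the conclusion $\ITdim B\leqslant m$, but the argument as you have written it does not go through without inserting the shift. The fix is minor: replace your $M=\overline{G}(N)$ by $M=\overline{G[-p]}(N)$ and track the extra $p$ through the computation.
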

\begin{proof}

Suppose that $\ITdim B=m$ and $B$ is $(m,n)$-Igusa Todrov.

Let $F:D^{b}(\mod A)\longrightarrow D^{b}(\mod B)$ be a derived equivalence,
and $G$ be a quasi-inverse of $F$.
Without loss of generality, we can assume that $F$ is nonnegative and the tilting complex associated to
$F$ has terms only in degrees $0,-1,\cdots,-p.$ Then $G[-p]$ is also nonnegative.
Let $X\in\mod A$. Then $\overline{F}(X)\in \mod B.$
By assumption $\ITdim B=m$
and Definition \ref{mnITalgebra}, we know that there exists an $(m,n)$-Igusa Todrov module $V$,
and the following exact sequence in $\mod B$
  \begin{align}\label{cite5}
0 \longrightarrow V_{m}\longrightarrow V_{m-1}\longrightarrow \cdots \longrightarrow V_{1}
\longrightarrow V_{0}\longrightarrow \Omega^{n}_{B}(\overline{F}(X))\longrightarrow 0.
\end{align}

Applying the stable functor of $G[-p]$ to the above exact sequence (\ref{cite5})
and by Corollary \ref{HuPan4}, we can get the following exact sequence
  \begin{align}\label{cite6}
0 \longrightarrow \overline{G[-p]}(V_{m})\longrightarrow \overline{G[-p]}(V_{m-1})\oplus P_{m-1}\longrightarrow \cdots
\longrightarrow \overline{G[-p]}(V_{0})\oplus P_{0}\longrightarrow \overline{G[-p]}(\Omega^{n}_{B}(\overline{F}(X)))\longrightarrow 0
\end{align}
in $\mod A.$
On the other hand, we have the following isomorphisms in $\underline{\mod A}$
\begin{align*}
&\overline{G[-p]}(\Omega^{n}_{B}(\overline{F}(X)))\\
 \cong & (\overline{G[-p]}\circ \overline{[-n]}\circ \overline{F})(X) \;\;\;\;\;(\text{by Lemma \ref{HuPan0}} )\\
  \cong & (\overline{G[-p]\circ [-n]\circ{F}})(X) \;\;\;\;\;(\text{by Lemma \ref{HuPan1}} )\\
  \cong & \overline{[-p-n]}(X)\;\;\;\;\;(\text{by Lemma \ref{HuPan2}} )\\
   \cong & \Omega^{p+n}_{A}(X) \;\;\;\;\;(\text{by Lemma \ref{HuPan0}} ).
\end{align*}
Then
 \begin{align}\label{cite7}
\overline{G[-p]}(\Omega^{n}_{B}(\overline{F}(X)))\oplus P\cong \Omega^{p+n}_{A}(X) \oplus Q.
\end{align}
By the above exact sequence (\ref{cite6}) and isomorphism (\ref{cite7}),
 we have the following exact sequence

 \begin{align}\label{cite8}
0 \longrightarrow \overline{G[-p]}(V_{m})\longrightarrow \overline{G[-p]}(V_{m-1})\oplus P_{m-1}\longrightarrow \cdots
\longrightarrow \overline{G[-p]}(V_{0})\oplus P_{0}\oplus P\longrightarrow \Omega^{p+n}_{A}(X) \oplus Q\longrightarrow 0.
\end{align}
By Lemma \ref{lemma-long-exact} and the exact sequence (\ref{cite8}), we get the following exact sequence
$$0 \longrightarrow \overline{G[-p]}(V_{m})\longrightarrow \overline{G[-p]}(V_{m-1})\oplus P_{m-1}\longrightarrow \cdots
\longrightarrow \overline{G[-p]}(V_{1})\oplus P_{1}\oplus Q
$$$$\longrightarrow \overline{G[-p]}(V_{0})\oplus P_{0}\oplus P\longrightarrow \Omega^{p+n}_{A}(X) \longrightarrow 0.$$
where $\overline{G[-p]}(V_{m}),\overline{G[-p]}(V_{i})\oplus P_{i},\overline{G[-p]}(V_{0})\oplus P_{0}\oplus P
\in \add (\overline{G[-p]}(V_{})\oplus A)$ for $1 \leqslant i \leqslant m-1.$
By Definition \ref{ITdim}, we have $\ITdim B \leqslant m=\ITdim A. $
Similarly, we also have $\ITdim B\geqslant\ITdim A. $
That is, $\ITdim A=\ITdim B. $

\end{proof}

\begin{corollary}{\rm (\cite{huwei2017Stable}\cite[Theorem 4.5]{Wei2018Derived})}
If Artin algebra $\Lambda$ and $\Gamma$ are derived equivalent,
 then $ \Lambda$ is syzygy-finite if and only if $\Gamma$ is  syzygy-finite.
 In other words, $\ITdim \Lambda=0$ if and only if $\ITdim \Gamma=0.$
\end{corollary}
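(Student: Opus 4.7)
The plan is to derive this statement directly from Theorem \ref{cite11}, which asserts that $\ITdim$ is an invariant under derived equivalence. The only content beyond invoking that theorem is the translation between the hypothesis ``syzygy-finite'' and the numerical condition ``$\ITdim = 0$''.

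First I would unwind Definition \ref{ITdim}: the equation $\ITdim\Lambda = 0$ means that $\Lambda$ is $(0,n)$-Igusa-Todorov for some nonnegative integer $n$. By Remark \ref{mnITremark}(2), $(0,n)$-Igusa-Todorov coincides with $n$-syzygy-finite. Combining these, $\ITdim\Lambda = 0$ holds if and only if there exists some $n\geqslant 0$ with $\Omega^{n}(\mod\Lambda) = \add M$ for some $M\in\mod\Lambda$, i.e. if and only if $\Lambda$ is syzygy-finite in the sense recalled just before Remark \ref{mnITremark}. The same equivalence holds verbatim with $\Gamma$ in place of $\Lambda$.

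Then the proof reduces to a single line: Theorem \ref{cite11} gives $\ITdim \Lambda = \ITdim \Gamma$, and in particular $\ITdim\Lambda = 0 \iff \ITdim\Gamma = 0$. Via the equivalence established in the previous paragraph, this is precisely the assertion that $\Lambda$ is syzygy-finite if and only if $\Gamma$ is syzygy-finite.

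There is essentially no obstacle: the work has already been done in Theorem \ref{cite11} (where the nontrivial machinery involving the stable functor of a derived equivalence, Lemmas \ref{HuPan0}--\ref{HuPan3} and Corollary \ref{HuPan4}, is used). The only mild care one must take is to note that ``syzygy-finite'' in the hypothesis is not quantified by a fixed $n$, so one should appeal to the unquantified form of Remark \ref{mnITremark}(2); but this is transparent from the definition given in the paragraph preceding that remark. Hence the corollary is immediate.
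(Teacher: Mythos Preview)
Your proposal is correct and follows essentially the same approach as the paper: the paper's proof is simply ``By Remark \ref{mnITremark} and Theorem \ref{cite11}'', which is exactly the combination you spell out in detail. Your explicit unwinding of the equivalence between $\ITdim\Lambda=0$ and syzygy-finiteness via Definition \ref{ITdim} and Remark \ref{mnITremark}(2) is a faithful expansion of what the paper leaves implicit.
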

\begin{proof}
 By Remark \ref {mnITremark} and Theorem \ref{cite11}.
\end{proof}

\begin{corollary}\label{weiIT}{\rm (\cite[Theorem 5.4]{Wei2018Derived})}
If Artin algebra $\Lambda$ and $\Gamma$ are derived equivalent,
and $ \Lambda$ is an Igusa-Todorov algebra, then $\Gamma$ is also an Igusa-Todorov algebra.
\end{corollary}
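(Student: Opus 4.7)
The plan is to deduce Corollary \ref{weiIT} immediately from Theorem \ref{cite11}, together with the observation that being an Igusa-Todorov algebra is exactly captured by the condition $\ITdim \leqslant 1$. Since $\ITdim$ has just been shown to be a derived invariant, this property will transfer to $\Gamma$ automatically.

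First I would translate the hypothesis into dimension language. If $\Lambda$ is an Igusa-Todorov algebra, then by definition there exists some $n\geqslant 0$ such that $\Lambda$ is $n$-Igusa-Todorov. By Remark \ref{mnITremark}(1), this is the same as saying that $\Lambda$ is $(1,n)$-Igusa-Todorov, and so by Definition \ref{ITdim} we obtain $\ITdim \Lambda \leqslant 1$.

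Next I would apply Theorem \ref{cite11} to the given derived equivalence, yielding $\ITdim \Gamma = \ITdim \Lambda \leqslant 1$. By Definition \ref{ITdim}, there exist nonnegative integers $m\leqslant 1$ and $n$ with $\Gamma$ being $(m,n)$-Igusa-Todorov. If $m=1$, then $\Gamma$ is $n$-Igusa-Todorov by Remark \ref{mnITremark}(1), hence Igusa-Todorov. If instead $m=0$, then $\Gamma$ is $n$-syzygy-finite by Remark \ref{mnITremark}(2), so $\Omega^{n}(\mod\Gamma)=\add V$ for some $V\in\mod\Gamma$; taking the trivial exact sequence $0\to 0\to \Omega^{n}(M)\to \Omega^{n}(M)\to 0$ with $\Omega^{n}(M)\in\add V$ exhibits $V$ as an $n$-Igusa-Todorov module in the sense of Definition \ref{nITalgebra}, so $\Gamma$ is again Igusa-Todorov.

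There is essentially no obstacle here, because Theorem \ref{cite11} carries all the genuine content (the use of the stable functor, the horseshoe-type lemmas, and the syzygy identification via Lemma \ref{HuPan0}--Lemma \ref{HuPan3}). The only minor bookkeeping step is the case split on $m\in\{0,1\}$, where the $m=0$ case is settled by the trivial observation that $n$-syzygy-finite algebras are automatically $n$-Igusa-Todorov. Thus the whole corollary collapses to a one-line consequence of Theorem \ref{cite11} once the equivalence between the Igusa-Todorov property and $\ITdim\leqslant 1$ is recorded.
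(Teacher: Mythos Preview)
Your proposal is correct and follows essentially the same approach as the paper: the paper's proof is the single line ``By Theorem \ref{cite11} and Definition \ref{nITalgebra},'' which is exactly your strategy of translating the Igusa--Todorov property into $\ITdim\leqslant 1$ and invoking the derived invariance of $\ITdim$. Your added case split on $m\in\{0,1\}$ just makes explicit what the paper leaves implicit.
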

\begin{proof}
By Theorem \ref{cite11}  and Definition \ref{nITalgebra}.
\end{proof}

Recall that an $\Lambda$-module $T$ is said to be a \emph{tilting module} if $T$ satisfied the following three conditions:

 (1) $\pd(T_{\Lambda})\leqslant n$,

  (2) $\Ext_{\Lambda}^i(T,T)=0$ for all $i>0$,
 and

  (3) there exists an exact sequence $0\longrightarrow \Lambda\longrightarrow T_0\longrightarrow \cdots\longrightarrow T_n\longrightarrow 0$
   in $\mod\Lambda$ with each $T_i$ in $\add(T_{\Lambda})$.

    By Theorem \ref{cite11}, we have
\begin{corollary}
Given an Artin algebra $\Lambda$. Let $T$ be a tilting module in $\mod \Lambda$ and
  $\Gamma =\End_{\Lambda}(T)$.
  Then
  $\ITdim \Lambda=\ITdim \Gamma.$
\end{corollary}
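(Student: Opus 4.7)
The plan is to reduce this corollary directly to Theorem \ref{cite11}, since that theorem asserts that the Igusa-Todorov dimension is invariant under derived equivalence. The only thing to verify is the well-known classical fact that if $T$ is a tilting module in $\mod\Lambda$, then $\Lambda$ and $\Gamma=\End_{\Lambda}(T)$ are derived equivalent.

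First, I would recall Happel's theorem: for a tilting module $T$ with $\pd(T_{\Lambda})\leqslant n$, $\Ext^i_{\Lambda}(T,T)=0$ for all $i>0$, and a coresolution $0\to\Lambda\to T_0\to\cdots\to T_n\to 0$ with $T_i\in\add(T)$, the total derived functor $\mathbf{R}\Hom_{\Lambda}(T,-): D^{b}(\mod\Lambda)\to D^{b}(\mod\Gamma)$ is a triangle equivalence (with quasi-inverse $-\overset{\mathbf{L}}{\otimes}_{\Gamma}T$). Equivalently, $T$ (viewed as a stalk complex concentrated in degree $0$) is a tilting complex for $\Lambda$ whose endomorphism ring is $\Gamma$, so Rickard's theorem yields the derived equivalence.

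Having established that $\Lambda$ and $\Gamma$ are derived equivalent, Theorem \ref{cite11} immediately gives $\ITdim\Lambda=\ITdim\Gamma$, which completes the proof.

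There is no real obstacle: the entire content is in invoking the tilting theorem of Happel/Brenner--Butler to produce a derived equivalence, followed by an appeal to the invariance result already proved in Theorem \ref{cite11}. The only stylistic choice is whether to cite Happel's original paper or Rickard's Morita theorem for derived categories; either is standard.
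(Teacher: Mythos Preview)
Your proposal is correct and matches the paper's approach exactly: the paper's proof consists solely of the phrase ``By Theorem \ref{cite11}, we have'', relying implicitly on the classical fact that a tilting module induces a derived equivalence between $\Lambda$ and $\Gamma=\End_{\Lambda}(T)$. Your extra explanation via Happel's (or Rickard's) theorem just makes explicit what the paper leaves unsaid.
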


\section{The singularity category and the Igusa-Todorov distance}

Recall that
the quotient triangulated category
$$ D_{sg}(\mod\Lambda):= D^{b}(\mod\Lambda)/K^{b}(\proj\Lambda)$$
is the singularity category of $\Lambda$,
where $K^{b}(\proj\Lambda)$ is the bounded homotopy category.
We denote by $q:  D^{b}(\mod\Lambda)\longrightarrow D_{sg}(\mod\Lambda)$
the quotient.
\begin{lemma}{\rm (\cite[Lemma 2.4(2)(a)]{dao2015Upper}\cite[Lemma 2.1]{chen2011singularity}) }\label{cite14}
Let $\textbf{X}\in  D_{sg}(\mod\Lambda)$.
Then there exists a module $M\in \mod\Lambda$ and $r\in\mathbb{Z}$
such that $\textbf{X}\cong q(S^{0}(M))[r]$ in $ D_{sg}(\mod\Lambda)$.
\end{lemma}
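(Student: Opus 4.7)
The plan is to show that any object of $D_{sg}(\mod\Lambda)$ is represented by a shifted stalk complex, by inductively trading degrees using projective covers and the fact that bounded complexes of projectives vanish in the singularity category.

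First I would represent $\textbf{X}$ by a bounded complex $X^{\bullet}$ in $D^{b}(\mod\Lambda)$ and, after applying a cohomological shift (which is invertible in $D_{sg}(\mod\Lambda)$ and gets absorbed into the integer $r$), assume that the nonzero terms of $X^{\bullet}$ sit in degrees $[-n,0]$ for some $n\geqslant 0$. I would then proceed by induction on $n$. The base case $n=0$ is immediate: $X^{\bullet}=S^{0}(X^{0})$, so take $M=X^{0}$.

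For the inductive step ($n\geqslant 1$), the idea is to eliminate the top term $X^{0}$ at the expense of pushing a syzygy down into degree $-1$. Pick a projective cover $\pi\colon P\twoheadrightarrow X^{0}$ with kernel $K=\Ker\pi$, giving the short exact sequence $0\to K\to P\to X^{0}\to 0$. Using this, I would construct a complex $\widetilde{X}^{\bullet}$ that agrees with $X^{\bullet}$ in degrees $\leqslant -2$, has $\widetilde{X}^{-1}=X^{-1}\oplus K$, and $\widetilde{X}^{0}=P$; the differentials are the obvious ones (the lift of $d^{-1}\colon X^{-1}\to X^{0}$ along $\pi$, combined with the inclusion $K\hookrightarrow P$). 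There is a termwise-split short exact sequence of complexes
$$0\longrightarrow Z^{\bullet}\longrightarrow \widetilde{X}^{\bullet}\longrightarrow S^{0}(P)\longrightarrow 0,$$
where $Z^{\bullet}$ is concentrated in degrees $[-n,-1]$ with $Z^{-1}=X^{-1}\oplus K$ and $Z^{i}=X^{i}$ for $i\leqslant -2$. A straightforward computation shows that $\widetilde{X}^{\bullet}$ is quasi-isomorphic to $X^{\bullet}$.

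The distinguished triangle $Z^{\bullet}\to \widetilde{X}^{\bullet}\to S^{0}(P)\to Z^{\bullet}[1]$ in $D^{b}(\mod\Lambda)$ becomes, after applying $q$, a triangle in $D_{sg}(\mod\Lambda)$ in which $S^{0}(P)\in K^{b}(\proj\Lambda)$ is zero. Therefore $q(X^{\bullet})\cong q(\widetilde{X}^{\bullet})\cong q(Z^{\bullet})$ in $D_{sg}(\mod\Lambda)$. Since $Z^{\bullet}$ has length $n-1$ (after a shift by $[-1]$), the induction hypothesis produces a module $M$ and an integer $r'$ with $q(Z^{\bullet})\cong q(S^{0}(M))[r']$, and the desired conclusion follows.

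The main obstacle is the bookkeeping in Step 3: constructing $\widetilde{X}^{\bullet}$ with differentials that simultaneously make the map $\widetilde{X}^{\bullet}\to X^{\bullet}$ (or vice versa) a quasi-isomorphism and make $Z^{\bullet}\hookrightarrow\widetilde{X}^{\bullet}\twoheadrightarrow S^{0}(P)$ exact at the level of complexes. Everything else (reducing to a bounded complex, invoking $K^{b}(\proj\Lambda)\simeq 0$ in $D_{sg}$, and running the induction) is formal.
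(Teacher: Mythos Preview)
Your inductive strategy is sound, but the construction of $\widetilde{X}^{\bullet}$ has a real gap, not just bookkeeping: the lift $\ell\colon X^{-1}\to P$ of $d^{-1}$ along the surjection $\pi\colon P\twoheadrightarrow X^{0}$ need not exist, because $X^{-1}$ is not assumed projective. Concretely, take $\Lambda=k[x]/(x^{2})$, $X^{-1}=X^{0}=k$ and $d^{-1}=\mathrm{id}_{k}$; every map $k\to\Lambda$ lands in the socle, which $\pi$ kills, so $\pi\ell=0\neq\mathrm{id}$ for any $\ell$. Hence you cannot in general form $\widetilde{X}^{-1}=X^{-1}\oplus K$ with the differential you describe.

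The fix is small: replace the direct sum $X^{-1}\oplus K$ by the pullback $X^{-1}\times_{X^{0}}P$ (which sits in a short exact sequence $0\to K\to X^{-1}\times_{X^{0}}P\to X^{-1}\to 0$ that need not split). With that change your quasi-isomorphism and the triangle argument go through verbatim. The paper does not prove this lemma but cites \cite{chen2011Thesingularity,dao2015Upper}, where the standard argument avoids the induction altogether: replace $\textbf{X}$ by a quasi-isomorphic complex $P^{\bullet}$ of projectives bounded on one side, choose $m$ beyond the homological range of $\textbf{X}$, and use the brutal truncation triangle $\sigma_{\leqslant m}P^{\bullet}\to P^{\bullet}\to\sigma_{\geqslant m+1}P^{\bullet}\to(\sigma_{\leqslant m}P^{\bullet})[1]$. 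Here $\sigma_{\leqslant m}P^{\bullet}\in K^{b}(\proj\Lambda)$ vanishes in $D_{sg}(\mod\Lambda)$, while $\sigma_{\geqslant m+1}P^{\bullet}$ has homology concentrated in the single degree $m+1$ and is therefore already a shifted stalk complex in $D^{b}(\mod\Lambda)$. This one-step version is what your induction is approximating once you peel off all the projectives at once.
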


\begin{lemma}{\rm (\cite[Lemma 2.2]{chen2011singularity}) }\label{cite15}
For each module $M\in \mod\Lambda$, we have
$$q(S^{0}(M))\cong q(S^{0}(\Omega^{n}(M)))[n]$$ in
$D_{sg}(\mod\Lambda)$ for each nonnegative integer $n.$
\end{lemma}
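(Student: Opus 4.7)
The plan is to prove the isomorphism by induction on $n$, using that short exact sequences in $\mod\Lambda$ lift to distinguished triangles in $D^{b}(\mod\Lambda)$, and that the quotient functor $q$ kills perfect complexes.

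The base case $n=0$ is immediate since $q(S^{0}(\Omega^{0}(M)))[0]=q(S^{0}(M))$ by the convention $\Omega^{0}(M)=M$. For the step from $n-1$ to $n$, I would consider the projective cover
\[
0\longrightarrow \Omega^{n}(M)\longrightarrow P_{n-1}\longrightarrow \Omega^{n-1}(M)\longrightarrow 0
\]
in $\mod\Lambda$, where $P_{n-1}$ is projective. This short exact sequence yields a distinguished triangle
\[
S^{0}(\Omega^{n}(M))\longrightarrow S^{0}(P_{n-1})\longrightarrow S^{0}(\Omega^{n-1}(M))\longrightarrow S^{0}(\Omega^{n}(M))[1]
\]
in $D^{b}(\mod\Lambda)$. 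Applying the triangle functor $q:D^{b}(\mod\Lambda)\longrightarrow D_{sg}(\mod\Lambda)$, and using that $S^{0}(P_{n-1})\in K^{b}(\proj\Lambda)$ so $q(S^{0}(P_{n-1}))=0$, the resulting triangle in $D_{sg}(\mod\Lambda)$ degenerates into an isomorphism
\[
q(S^{0}(\Omega^{n-1}(M)))\cong q(S^{0}(\Omega^{n}(M)))[1].
\]

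Shifting by $[n-1]$ and combining with the induction hypothesis $q(S^{0}(M))\cong q(S^{0}(\Omega^{n-1}(M)))[n-1]$ then yields
\[
q(S^{0}(M))\cong q(S^{0}(\Omega^{n}(M)))[1][n-1]=q(S^{0}(\Omega^{n}(M)))[n],
\]
completing the induction. The only subtle point, and the one I would be most careful to check, is the translation from a short exact sequence in the abelian category to a distinguished triangle in $D^{b}$ with the correct sign and shift convention; once that is set, vanishing of $q$ on perfect complexes does all the work. Since $\Omega^{n}(M)$ is only defined up to projective direct summands, the isomorphism should be read in $D_{sg}(\mod\Lambda)$, where such ambiguity is absorbed — this is the standard reason why the construction is well posed.
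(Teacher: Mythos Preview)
Your argument is correct and is exactly the standard proof of this fact: induct on $n$, use the short exact sequence $0\to\Omega^{n}(M)\to P_{n-1}\to\Omega^{n-1}(M)\to 0$ to obtain a triangle in $D^{b}(\mod\Lambda)$, and observe that $q$ annihilates the projective middle term so that the triangle collapses to the desired shift isomorphism in $D_{sg}(\mod\Lambda)$. Note, however, that the paper does not supply its own proof of this lemma; it simply quotes it from \cite[Lemma~2.2]{chen2011Thesingularity}, so there is no in-paper argument to compare against---your proof is precisely the one that reference contains.
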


Now we can establish the main result in this section.
\begin{theorem}\label{sing-IT}
 Given an Artin algebra $\Lambda$. We have
 $\tridim D_{sg}(\mod\Lambda)\leqslant \ITdim \Lambda.$
\end{theorem}
\begin{proof}
Let $\textbf{X}\in  D_{sg}(\mod\Lambda)$. By Lemma \ref{cite14},
there exists a module $M\in \mod\Lambda$ and $r\in\mathbb{Z}$
such that $\textbf{X}\cong q(S^{0}(M))[r]$ in $ D_{sg}(\mod\Lambda)$.
And by Lemma  \ref{cite15},
we can get $\textbf{X}\cong q(S^{0}(\Omega^{n}(M)))[n+r]$, that is,
$\textbf{X}[-r]\cong q(S^{0}(\Omega^{n}(M)))[n]$
in $ D_{sg}(\mod\Lambda)$.

We can set $\Lambda$ is an $(m,n)$-IT algebra.
That is, there is a module $V$ such that
for any module $M$, we have the following exact sequence
\begin{align}\label{cite9}
  0\rightarrow V_{m} \rightarrow V_{m-1}\rightarrow \cdots \rightarrow
 V_{1}\rightarrow V_{0} \rightarrow \Omega^{n}(M) \rightarrow 0,
\end{align}
where $V_{i}\in\add V$ for $0\leqslant  i\leqslant n$.

By the above exact sequence (\ref{cite9}), we can get  the following short exact sequences
\begin{equation*}
\begin{cases}
\xymatrix@C=1em@R=0.1em{
0 \ar[r] &K_{1} \ar[r] &V_{0}\ar[r] & \Omega^{n}(M) \ar[r] & 0\\
0\ar[r] & K_{2}  \ar[r] & V_{1} \ar[r] & K_{1}  \ar[r] & 0\\
0 \ar[r] &  K_{2} \ar[r]&  V_{1}\ar[r] & K_{1}\ar[r] & 0\\
&& \vdots&\\
0 \ar[r] &  K_{m-1} \ar[r] &  V_{m-2}\ar[r] & K_{m-2} \ar[r] &  0\\
0 \ar[r] &  V_{m}\ar[r] &  V_{m-1}\ar[r] &  K_{m-1} \ar[r] &  0.
}
\end{cases}
\end{equation*}
Then we have the following triangles in $D^{b}(\mod\Lambda)$
\begin{equation*}
\begin{cases}
\xymatrix@C=1em@R=0.1em{
 S^{0}(K_{1})\ar[r]& S^{0}(V_{0})\ar[r]& S^{0}(\Omega^{n}(M)) \ar[r]& S^{0}(K_{1})[1]\\
 S^{0}(K_{2}) \ar[r]& S^{0}(V_{1})\ar[r]& S^{0}(K_{1}) \ar[r]&  S^{0}(K_{2})[1]\\
 &&\vdots \\
 S^{0}(K_{m-1})\ar[r]& S^{0}(V_{m-2})\ar[r]& S^{0}(K_{m-2}) \ar[r]& S^{0}(K_{m-1})[1]\\
 S^{0}(V_{m})\ar[r]&S^{0}(V_{m-1})\ar[r]& S^{0}(K_{m-1}) \ar[r]&  S^{0}(V_{m})[1].
}
\end{cases}
\end{equation*}
Then we have the following triangles in $D_{sg}(\mod\Lambda)$
\begin{equation*}
\begin{cases}
\xymatrix@C=1em@R=0.1em{
 q(S^{0}(K_{1}))\ar[r]&  q(S^{0}(V_{0}))\ar[r]&  q(S^{0}(\Omega^{n}(M))) \ar[r]&  q(S^{0}(K_{1}))[1]\\
 q( S^{0}(K_{2})) \ar[r]&  q(S^{0}(V_{1}))\ar[r]&  q(S^{0}(K_{1})) \ar[r]&   q(S^{0}(K_{2}))[1]\\
 &&\vdots \\
 q( S^{0}(K_{m-1}))\ar[r]&  q(S^{0}(V_{m-2}))\ar[r]&  q(S^{0}(K_{m-2})) \ar[r]&  q(S^{0}(K_{m-1}))[1]\\
  q(S^{0}(V_{m}))\ar[r]& q(S^{0}(V_{m-1}))\ar[r]&  q(S^{0}(K_{m-1})) \ar[r]&   q(S^{0}(V_{m}))[1].
}
\end{cases}
\end{equation*}

Moreover, we can get the following triangles in $D_{sg}(\mod\Lambda)$,
\begin{equation*}
\begin{cases}
\xymatrix@C=1em@R=0.1em{
  q(S^{0}(V_{0}))[n] \ar[r]&  q(S^{0}(\Omega^{n}(M)))[n] \ar[r]&  q(S^{0}(K_{1}))[n+1] \ar[r]& q( S^{0}(V_{0}))[n+1]\\
  q(S^{0}(V_{1}))[n+1] \ar[r]&  q(S^{0}(K_{1}))[n+1]  \ar[r]&  q( S^{0}(K_{2}))[n+2] \ar[r]&  q(S^{0}(V_{1}))[n+2]\\
& \vdots\\
 q( S^{0}(V_{m-2}))[n+m-2] \ar[r]&  q(S^{0}(K_{m-2}))[n+m-1]  \ar[r]&  q(S^{0}(K_{m-1}))[1] \ar[r]&  q(S^{0}(V_{m-2}))[n+m-1] \\
  q(S^{0}(V_{m-1}))[n+m-1] \ar[r]&  q(S^{0}(K_{m-1}))[n+m-1] \ar[r]&   q(S^{0}(V_{m}))[n+m] \ar[r]&  q(S^{0}(V_{m-1}))[n+m] .
}
\end{cases}
\end{equation*}

So we have
\begin{align*}
\textbf{X}[-r]&\cong  q(S^{0}(\Omega^{n}(M)))[n]\\
&\in \langle q(S^{0}(\langle V_{0}))[n] \rangle_{1} \diamond \langle  q(S^{0}(K_{1}))[n+1] \rangle_{1}\\
&\subseteq  \langle q(S^{0}(\langle V_{0}))[n] \rangle_{1}\diamond \langle  q(S^{0}(V_{1}))[n+1] \rangle_{1} \diamond \langle  q(S^{0}(K_{2}))[n+1] \rangle_{1}\\
& \;\;\;\;\;\vdots\\
&\subseteq \langle  q(S^{0}(V_{0}))[n] \rangle_{1}\diamond  q(S^{0}(\langle V_{1}))[n+1] \rangle_{1} \diamond \cdots \diamond
\langle  q(S^{0}(V_{m-1}))[n+m-1] \rangle_{1}\diamond\langle  q(S^{0}(V_{m}))[n+m] \rangle_{1}\\
&\subseteq \underbrace{\langle  q(S^{0}(V)) \rangle_{1}\diamond \langle  q(S^{0}(V)) \rangle_{1} \diamond \cdots \diamond
\langle  q(S^{0}(V))\rangle_{1}\diamond\langle  q(S^{0}( V))\rangle_{1}}_{m+1}\\
&\subseteq \langle  q(S^{0}(V)) \rangle_{m+1}.
\end{align*}
Then $D_{sg}(\mod\Lambda)=\langle   q(S^{0}(V) )\rangle_{m+1}.$
Moreover, we have $\tridim D_{sg}(\mod\Lambda) \leqslant m=\ITdim \Lambda.$
\end{proof}

For a module $M\in \mod\Lambda$, we use $\rad M$
to denote the radical of $M$.
Let $\mathcal{V}$ be a subset of all simple modules, and $\mathcal{V}'$
the set of all the others simple modules in $\mod\Lambda$.
We write
$$\mathfrak{F}(\mathcal{V}):=\{M\in\mod\Lambda\;|\;\text{ there exists a chain }
0\subseteq M_{0}\subseteq M_{1}\subseteq M_{2}\subseteq \cdots\subseteq M_{m-1}\subseteq M_{m}=M$$
$$\text{ of submodules of  } M \text{ such that each quotients } M_{i}/M_{i-1}\in\mathcal{V}\}.$$
Note that $\mathfrak{F}(\mathcal{V})$ is closed under extensions, submodules and quotients modules. Then we have
a torsion pair $(\mathcal{T},\mathfrak{F}(\mathcal{V}))$, and the corresponding torsion radical is
denoted by $t_{\mathcal{V}}$. For a subclass $\mathcal{B}$ of $\mod \Lambda$,
  the {\bf projective dimension} $\pd\mathcal{B}$
of $\mathcal{B}$ is defined as
\begin{equation*}
\pd \mathcal{B}=
\begin{cases}
\sup\{\pd M\;|\; M\in \mathcal{B}\}, & \text{if} \;\; \mathcal{B}\neq \varnothing;\\
-1,&\text{if} \;\; \mathcal{B}=\varnothing.
\end{cases}
\end{equation*}
\begin{definition}{\rm (\cite{huard2013layer})
The $t_{\mathcal{V}}$-radical layer length is a function
$\ell\ell^{t_{\mathcal{V}}}:\;\;\mod\Lambda \longrightarrow \mathbb{N}\cup \{\infty\}$
 via $$\ell\ell^{t_{\mathcal{V}}}(M)=\inf\{i\geqslant 0\;|\;t_{\mathcal{V}}\circ F_{t_{\mathcal{V}}}^{i}(M)=0, M\in \mod\Lambda\}$$
 where $F_{t_{\mathcal{V}}}=\rad\circ t_{\mathcal{V}}. $
 }
\end{definition}

\begin{theorem}
\label{theorem4}
  Let $\Lambda$ be an Artin algebra.
  $\V$ is the set of some simple modules with finite projective dimension.
Then $\Lambda$ is a $(\max\{\ell\ell^{t_{\V}}(\Lambda)-2,0\},\pd \V+2)$-Igusa-Todorov algebra.
  \end{theorem}
\begin{proof}
  If $\ell\ell^{t_{\V}}(\Lambda)\leqslant 2$, then
  $\Lambda$ is $(\pd \V+2)$-Igusa-Todorov algebra(see \cite{zheng2021radicalfinite}).
That is, $\Lambda$ is a $(0,\pd \V+2)$-Igusa-Todorov algebra(see Remark \ref{mnITremark}(2)).

  If $\ell\ell^{t_{\V}}(\Lambda)\geqslant 2$, then
  $\Lambda$ is $(\ell\ell^{t_{\V}}(\Lambda)-2,\pd \V+2)$-Igusa-Todorov algebra(see \cite[Theorem 4.7]{zheng2022mnIT}).
\end{proof}

\begin{proposition}\label{prop-ITdim}
Let $\Lambda$ be an Artin algebra.
  $\V$ is the set of some simple modules with finite projective dimension.
Then $\ITdim \Lambda\leqslant \max\{\ell\ell^{t_{\V}}(\Lambda)-2,0\}.$
\end{proposition}
\begin{proof}
By Theorem \ref{theorem4} and Definition \ref{ITdim}.
\end{proof}

\begin{corollary}Let $\Lambda$ be an Artin algebra.
  $\V$ is the set of some simple modules with finite projective dimension. Then
 $ \ITdim \Lambda \leqslant\max\{\ell\ell^{t_{\V}}(\Lambda)-2,0\}.$
\end{corollary}

\begin{corollary}{\rm (\cite[Theorem 3.14]{zheng2020upper})}
Let $\Lambda$ be an Artin algebra.
  $\V$ is the set of some simple modules with finite projective dimension.
Then $\tridim D_{sg}(\mod\Lambda) \leqslant \max\{\ell\ell^{t_{\V}}(\Lambda)-2,0\}$.
\end{corollary}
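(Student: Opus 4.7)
The plan is to deduce this corollary directly by chaining two results that have already been established in the paper, without revisiting the underlying constructions with $q\circ S^{0}$ or the radical layer length machinery. Specifically, the statement is an immediate consequence of Theorem~\ref{sing-IT} combined with Proposition~\ref{prop-ITdim}.

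First I would invoke Theorem~\ref{sing-IT}, which asserts that for any artin algebra $\Lambda$ one has the inequality
\[
\tridim D_{sg}(\mod\Lambda) \leqslant \ITdim \Lambda .
\]
This reduces the problem to estimating $\ITdim \Lambda$ in terms of the radical layer length $\ell\ell^{t_{\V}}(\Lambda)$ associated with the chosen set $\V$ of simple modules of finite projective dimension.

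Next I would apply Proposition~\ref{prop-ITdim}, which gives precisely the bound
\[
\ITdim \Lambda \leqslant \max\{\ell\ell^{t_{\V}}(\Lambda)-2,0\}.
\]
Concatenating this with the previous inequality yields the desired estimate
\[
\tridim D_{sg}(\mod\Lambda) \leqslant \max\{\ell\ell^{t_{\V}}(\Lambda)-2,0\}.
\]

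There is essentially no obstacle here, since both ingredients are already proved earlier in the paper: Theorem~\ref{sing-IT} was established by producing, for each object of $D_{sg}(\mod\Lambda)$, an explicit filtration by triangles coming from the $(m,n)$-Igusa--Todorov resolution, and Proposition~\ref{prop-ITdim} was in turn deduced from Theorem~\ref{theorem4}. The only small point worth emphasizing in the write-up is that the bound on the right-hand side is independent of the second parameter $n$ appearing in the notion of $(m,n)$-Igusa--Todorov algebra, so taking the infimum over $m$ in Definition~\ref{ITdim} is exactly what links Theorem~\ref{theorem4} to the singularity-category estimate.
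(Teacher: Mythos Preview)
Your proof is correct and follows exactly the same approach as the paper, which simply cites Proposition~\ref{prop-ITdim} and Theorem~\ref{sing-IT} and chains the two inequalities together.
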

\begin{proof}
By Proposition \ref{prop-ITdim} and Theorem \ref{sing-IT}.
\end{proof}

\vspace{0.6cm}

{\bf Acknowledgements.}
This work was supported by the National Natural Science Foundation of China(Grant No. 12001508).

\end{document}